\newcommand{\esssup}{\mathop{\mathrm{ess\,sup}}}
\newenvironment{remark}[1][remark]{Remark}
\begin{document}

\title{An a posteriori error estimator for discontinuous Galerkin methods \\ for non-stationary convection-diffusion problems}
\author{%
{\sc
Andrea Cangiani,
Emmanuil H.~Georgoulis and Stephen Metcalfe} \\[2pt]
Department of Mathematics, University of Leicester, University Road, Leicester LE1 7RH, United Kingdom}

\maketitle

\begin{abstract}
{This work is concerned with the derivation of a robust a posteriori error estimator for a discontinuous Galerkin method discretisation of linear non-stationary convection-diffusion initial/boundary value problems and with the implementation of a corresponding adaptive algorithm. More specifically, we derive a posteriori bounds for the error in the $L^2(H^1)$-type norm for an interior penalty discontinuous Galerkin (dG) discretisation in space and a backward Euler discretisation in time. An important feature of the estimator is robustness with respect to the P\'{e}clet number of the problem which is verified in practice by a series of numerical experiments. Finally, an adaptive algorithm is proposed utilising the error estimator. Optimal rate of convergence of the adaptive algorithm is observed in a number of test problems. 
}
{discontinuous Galerkin methods;  a posteriori error estimators; time-dependent convection-diffusion equation.}
\end{abstract}

\begin{section}{Introduction}
The interaction between convection and diffusion modelled by initial/boundary value problems involving partial differential equations (PDEs) poses a number of challenges in the context of their numerical approximation. Indeed, stationary convection-dominated convection-diffusion type problems admit analytical solutions of a multiscale nature that can contain steep gradients, usually termed boundary or interior layers, depending upon their location in the computational domain. The accurate and efficient numerical resolution of such steep layers is a challenge as their exact location cannot be, in general, known a priori. In the special cases where the location of boundary or interior layers is known, structured grids have been successfully employed (\cite{RST08}). Ultimately, in non-stationary convection-diffusion equations, the nature of the solution (including layers) may vary throughout the domain as time progresses. This renders the use of adaptive algorithms an attractive proposition for the accurate and efficient numerical approximation of this class of problems. As adaptive algorithms are usually based on suitable a posteriori error estimators, the formulation of estimators that can robustly estimate both the temporal and spatial nature of the error are of particular interest.

A posteriori error estimation for stationary linear equations is now relatively well understood; for pure diffusion problems there is a huge array of  estimators available for a wide variety of different types of finite element discretisations (\cite{V96,AO00}) and for dG methods in particular (\cite{KP03,BHL03,HSW07}). For stationary convection-diffusion equations, the quest for robust a posteriori error estimators (in the sense that they are independent of the P\'{e}clet number of the problem) has seen recent advancements in various contexts (\cite{V98,V05II,K03,S08,SZ09,SZ11}).

A posteriori error estimators for non-stationary linear convection-diffusion equations are also available for various discretisations (\cite{HS01,BC04,V05,EP05, ABR05,APS05,SW06,GHH07,LPP09}).  A posteriori error estimators for (spatial) dG methods for non-stationary pure diffusion problems can be found in \cite{GLV11,EV10,S12}.

This work is concerned with the derivation and implementation of an a posteriori estimator  in the $L^2(H^1)$-norm for the backward Euler discretisation in time (zero-th order dG method) and the interior penalty dG discretisation in space of the non-stationary linear convection-diffusion equation with variable coefficients. The derivation of the a posteriori bound utilises the elliptic reconstruction technique (\cite{MN03,GLV11}) which allows the use of the robust elliptic error estimator from \cite{SZ09}. The temporal residual is also treated so as to ensure that robustness with respect to the P\'{e}clet number of the problem is maintained. The resulting a posteriori error estimator is shown numerically to be reliable in both time and space.  The a posteriori estimator derived below can be viewed as the analogue of the one presented in \cite{V05} when the spatial discretisation is a dG method.

The a posteriori estimator is used to drive an adaptive algorithm which is numerically assessed in two ways. First, the savings with respect to the degrees of freedom by using space-time adaptivity are shown in a series of test problems. Second, the rates of convergence of the space-time adaptive algorithm are computed. The adaptive algorithm appears to convergence optimally in both space and time when the spatial polynomial degree, $p$, satisfies $p \geq 2$. Somewhat surprisingly, for $p=1$, the a posteriori estimator appears to converge at a suboptimal rate when the mesh changes between time steps. The nature of this degeneracy for $p=1$ remains unclear to the authors: it is in fact possible that the method itself is suboptimal in time when mesh change is allowed. It is also unclear if this is a special phenomenon of the discretisation presented in this work or a more general phenomenon extending to other discretisations of  parabolic problems. This phenomenon will be investigated further elsewhere.

The remainder of this work is structured as follows. The function space setting and the model problem are given in Section \ref{prelim}. Section \ref{dg_section} describes the discretisation of the problem. In Section \ref{ell_bounds_sec} we state an a posteriori error bound for the stationary elliptic problem. Sections \ref{sd_apost_sec} and \ref{fd_apost_sec} contain the derivation of the a posteriori bounds for the non-stationary convection-diffusion model problem in the semi-discrete and the fully-discrete settings, respectively. Section \ref{adaptive_sec} describes a space-time adaptive algorithm driven by the a posteriori estimators derived in the previous section. A series of numerical experiments are presented in Section \ref{numerics_sec} and the final conclusions are drawn in Section \ref{conclusions_sec}.
\end{section}

\begin{section}{Model problem}\label{prelim}
Let the computational domain $\Omega \subset \mathbb{R}^2$ be a bounded Lipschitz polygon with boundary $\partial\Omega$. 

We denote the standard $L^2$-inner product on $\Omega$ by $(\cdot,\cdot)$ and the standard $L^2$-norm on $\Omega$ by $||\cdot||$.  For $1\le p\le +\infty$, we define the spaces $L^p(0,T;X)$ (where $X$ is a real Banach space with norm $\|\cdot\|_X$) that consist of all measurable functions $v: [0,T]\to X$ for which:
\begin{equation}
\begin{aligned}
\|v\|_{L^p(0,T;X)}&:=\Big(\int_0^T \|v(t)\|_{X}^p d t\Big)^{1/p}<+\infty,\quad \text{for}\quad 1\le p< +\infty, \\
\|v\|_{L^{\infty}(0,T;X)}&:=\esssup_{0\le t\le T}\|v(t)\|_{X}<+\infty,\quad \text{for}\quad  p = +\infty.
\end{aligned}
\end{equation}
We also define $H^1(0,T,X):=\{u\in L^2(0,T;X): u_t\in  L^2(0,T;X)\}$. 
Finally, we denote by $C(0,T;X)$ and $C^{0,1}(0,T;X)$, respectively, the spaces of continuous and Lipschitz-continuous functions $v:[0,T] \rightarrow X$ such that:
\begin{equation}
\begin{gathered}
\displaystyle ||v||_{C(0,T;X)}=\max_{0 \leq t \leq T}{||v(t)||_X} < \infty \mbox{,} \\ ||v||_{C^{0,1}(0,T;X)}=\max \Big\{  ||v||_{C(0,T;X)}, ||\frac{\partial{v}}{\partial{t}}||_{C(0,T,X)} \Big\} < \infty \mbox{.}
\end{gathered}
\end{equation}
We consider the model problem of finding $u:\Omega\to\mathbb{R}$ such that:
\begin{equation}\label{model_strong}
\begin{aligned}
\frac{\partial{u}}{\partial{t}} - \varepsilon\Delta{u}+ {\bf a} \cdot \nabla{u}+bu &= f \mbox{ } &\text{ in }  \Omega\times(0,T]  \mbox{,} \\ u &=0 \mbox{ } &\text{ on }  \partial\Omega\times (0,T] \mbox{,} \\ u(\cdot,0) &=u_0 \mbox{ } &\text{ in } \bar{\Omega}\mbox{.}
\end{aligned}
\end{equation}
We make the following assumptions: $u_0 \in L^2(\Omega)$, $f \in C(0,T;L^2(\Omega))$, $\varepsilon\in(0,1]$, ${\bf a} \in C(0,T;W^{1, \infty}(\Omega))^2$, and $b \in C(0,T;L^{\infty}(\Omega))$.

It is assumed that the length of $\bf a\equiv\bf a(x,t)$ and the area of $\Omega$ are of order one, possibly up to rescaling, so that $\epsilon^{-1}$ can be taken to be the P\'{e}clet number of the problem. For simplicity, we assume that there are constants $\beta \geq 0$ and $c_* \geq 0$ such that:
\begin{equation}
b - \frac{1}{2}\nabla \cdot {\bf a} \geq \beta \quad \text{a.e. in }  \Omega \times [0,T],\qquad ||-\nabla\cdot {\bf a} + b||_{C(0,T;L^{\infty}(\Omega))} \leq c_*\beta\mbox{.}
\end{equation}
The weak form of \eqref{model_strong} then reads: find $u\in L^2(0,T;H^1_0(\Omega))\cap H^1(0,T;L^2(\Omega))$ such that for each $t \in (0,T]$ we have
\begin{equation}\label{model_weak}
\int_{\Omega} \! \frac{\partial{u}}{\partial{t}}v \, dx+\int_{\Omega} \! (\varepsilon \nabla{u} \cdot \nabla{v}+{\bf a} \cdot \nabla{u}v + buv) \, dx = \int_{\Omega} \! fv \, dx \quad \forall v \in H^1_0(\Omega)\mbox{.}
\end{equation}
Under the regularity assumptions above, we have that $u \in C(0,T;L^2(\Omega))$.

\end{section}

\begin{section}{Discontinuous Galerkin method}\label{dg_section}

Let the mesh $\zeta=\{K\}$ be a shape-regular subdivision of $\Omega$, with $K$ denoting a generic element. We assume that the subdivision $\zeta$ is constructed via affine mappings $F_{K}:\hat{K}\to K$ with non-singular Jacobian where $\hat{K}$ is the reference triangle or the reference square. The mesh is allowed to contain a uniformly fixed number of regular hanging nodes per edge. We define the finite element space
\begin{equation}\label{eq:FEspace}
V_h\equiv V_h(\zeta) := \{v \in L^2(\Omega): \mbox{ }v|_K\circ F_K \in \mathcal{P}^p(\hat{K}), K \in \zeta \},
\end{equation}
 where $\mathcal{P}^p(K)$ is the space of polynomials of total degree $p$ if $\hat{K}$ is the reference triangle, or the space of polynomials of degree $p$ in each variable if $\hat{K}$ is the reference square. 

Denote by $\mathcal{E}(\zeta)$ the set of all edges in the triangulation $\zeta$ and $\mathcal{E}^{int}(\zeta)\subset\mathcal{E}(\zeta)$ the subset of all interior edges. We also denote the diameter of an element $K$ by $h_K$ and the length of an edge $E$ by $h_E$. The outward unit normal on the boundary $\partial{K}$ of an element $K$ is denoted by ${\bf n}_K$. Given an edge $E\in \mathcal{E}^{int}(\zeta)$ shared by two elements $K$ and $K'$, a vector field ${\bf v}\in [H^{1/2}(\Omega)]^2$ and a scalar field $v\in H^{1/2}(\Omega)$, we define jumps and averages of ${\bf v}$ and $v$ across $E$ by:
\begin{equation}
\begin{aligned}
 \{{\bf v}\} & = \frac{1}{2}({\bf v}|_{\bar{K}}+{\bf v}|_{\bar{K}'}), \qquad &
   [{\bf v}]  =& {\bf v}|_{\bar{K}} \cdot {\bf n}_K+ {\bf v}|_{\bar{K}'} \cdot {\bf n}_{K'},  \\
  \{v\} & = \frac{1}{2}( v|_{\bar{K}}+ v|_{\bar{K}'}), \qquad &
   [v]   = &  v|_{\bar{K}}  {\bf n}_K+  v|_{\bar{K}'}  {\bf n}_{K'}.
 \end{aligned}
\end{equation}
If $E\subset \partial\Omega$, we set $\{{\bf v}\}={\bf v}$, $[{\bf v}]={\bf v} \cdot {\bf n}$, $\{v\}= v$ and $[ v]= v {\bf n}$, with ${\bf n}$ denoting the outward unit normal to the boundary $\partial\Omega$. 

We define the inflow and outflow parts of the boundary $\partial\Omega$ at the time $t$, respectively, by:
\begin{equation}
 \partial\Omega^t_{in} = \{x \in \partial\Omega \mbox{ }|\mbox{ } {\bf a}(x,t) \cdot {\bf n}(x) < 0 \}\mbox{,} \qquad \partial\Omega^t_{out} = \{x \in \partial\Omega \mbox{ } | \mbox{ } {\bf a}(x,t) \cdot {\bf n}(x) \geq 0 \}\mbox{.}
\end{equation}
The inflow and outflow parts of an element $K$ at time $t$ are similarly defined as:
\begin{equation}
\partial K^t_{in} = \{x \in \partial K \mbox{ } | \mbox { } {\bf a}(x,t) \cdot {\bf n}_K(x) < 0 \}\mbox{,} \qquad \partial K^t_{out} = \{x \in \partial K \mbox{ } | \mbox{ } {\bf a}(x,t) \cdot {\bf n}_K(x) \geq 0 \}\mbox{.}
\end{equation}

The semi-discrete discontinuous Galerkin approximation to \eqref{model_weak} then reads as follows. For $t=0$, set $ u_h(0) \in V_h$ to be a projection of $u_0$ onto $V_h$. For $t \in (0,T]$, find $u_h \in C^{0,1}(0,T;V_h)$ such that
\begin{equation}\label{dg_sd}
(\frac{\partial{u_h}}{\partial{t}},v_h)+B(t;u_h,v_h)+K_h(u_h,v_h)=(f,v_h) \qquad  \forall v_h \in V_h \mbox{,}
\end{equation} 
with
\begin{equation}
\begin{aligned}
B(t;w,v) \equiv & B(\zeta;t;w,v)\\
:= & \sum_{K \in \zeta}{\int_{K} \! 
(\varepsilon \nabla w  -{\bf{a}}w) \cdot \nabla v+ (b- \nabla \cdot {\bf a})wv\, dx}
+\sum_{E \in \mathcal{E}(\zeta)}\frac{\varepsilon \gamma}{h_E}\int_{E} \! [w] \cdot [v] \, ds\\ 
&+ \sum_{K \in \zeta}\bigg(\int_{\partial{K}^t_{out} \cap \partial\Omega^t_{out}} \! {\bf{a}} \cdot {\bf n}_K wv \, ds+\int_{\partial{K}^t_{out} \setminus \partial\Omega} \! {\bf{a}}\cdot {\bf n}_K w(v|_{\bar{K}}-v|_{\bar{K}'}) \, ds\bigg), \\ 
K_h(w,v)\equiv & K_h(\zeta;w,v):= -\sum_{E \in \mathcal{E}(\zeta)}\int_{E} \! \{\varepsilon \nabla w \} \cdot [v] + \{\varepsilon \nabla v \} \cdot [w] \, ds,
\end{aligned}
\end{equation}
where $\bar{K}\cap\bar{K}'=E\subset \partial{K}^t_{out} \setminus \partial\Omega$.  In standard fashion, the penalty parameter $\gamma>0$ is chosen large enough so that the operator $B+K_h$ is coercive. Moreover, for simplicity of the presentation, we assume $\gamma > 1$ so that the constants in the subsequent discussion are independent of it. 

We note that the bilinear form $K_h$ is not well-defined for arguments in $H^1_0(\Omega)$, but the bilinear form $B$ is and for $u,v \in H^1_0(\Omega)$ and $t \in (0,T]$, we have
\begin{equation}
B(t;u,v)=\int_{\Omega} \! (\varepsilon \nabla{u} \cdot \nabla{v}+{\bf a}\cdot \nabla{u}v + buv) \, dx.
\end{equation}
In light of this, the weak problem~\eqref{model_weak} can be rewritten for each $t \in (0,T]$ as
\begin{equation}
\label{weakform}
(\frac{\partial{u}}{\partial{t}},v)+B(t;u,v)=(f,v) \qquad \forall v \in H^1_0(\Omega).
\end{equation}

We shall also consider a full discretisation of problem \eqref{model_weak} by using a backward Euler method to approximate the time derivative. 

To this end, consider a subdivision of $[0,T]$ into time intervals of lengths $\tau_1, \tau_2, ... , \tau_n$ such that $\sum_{j=1}^n{\tau_j}=T$ for some $n \geq 1$ and set $t^0 = 0$ and $t^k := \sum_{j=1}^{k}\tau_{k}$. Denote an initial triangulation by $\zeta^0$. We associate to each time step $k>0$ a triangulation $\zeta^k$  which is assumed to have been obtained from $\zeta^{k-1}$ by locally refining and coarsening $\zeta^{k-1}$. This restriction upon mesh change is made to avoid degradation of the finite element solution, {\em cf.}~\cite{D82}. To each mesh $\zeta^{k}$, we assign the finite element space $V_h^k = V_h(\zeta^k)$ given by~\eqref{eq:FEspace}. 

We set $B^k(t;\cdot,\cdot):=B(\zeta^k;t;\cdot,\cdot)$ and $K_h^k(\cdot,\cdot):=K_h(\zeta^k;\cdot,\cdot)$. Moreover, when the last two arguments of $B$ are in $H^1_0(\Omega)$, we simply denote the bilinear form by $B$. We also denote $f(.,t^k)=f^k$, ${\bf a}(.,t^k) = {\bf a}^k$, and $b(.,t^k)=b^k$ for brevity. 

The fully-discrete dG method then reads as follows. Set $u_h^0$ to be a projection of $u_0$ onto $V_h^0$. For $k=0$,...,$n-1$, find $u_h^{k+1} \in V_h^{k+1}$ such that
\begin{equation}\label{dg_fd}
\begin{aligned}
(\frac{u_h^{k+1}-u_h^k}{\tau_{k+1}},v_h^{k+1})+B^{k+1}(t^{k+1};u_h^{k+1},v_h^{k+1})+K^{k+1}_h(u_h^{k+1},v_h^{k+1})&=(f^{k+1},v_h^{k+1}) \\&\forall v_h^{k+1} \in V_h^{k+1}.
\end{aligned}
\end{equation}
We shall take $u_h^0$ to be the orthogonal $L^2$-projection of $u_0$ onto $V_h^0$, although other projections onto $V_h^0$ can also be used.\end{section}

\begin{section}{Error bounds for the dG method for the stationary problem}\label{ell_bounds_sec}
To analyse the error, we introduce the following quantities:
\begin{equation}
\begin{aligned}
|||u|||_{\zeta} := & \bigg(\sum_{K \in \zeta}(\varepsilon||\nabla{u}||^2_{L^2(K)}+\beta||u||^2_{L^2(K)})+\sum_{E \in \mathcal{E}(\zeta)}{\frac{\varepsilon \gamma}{h_E}||[u]||^2_{L^2(E)}}\bigg)^{1/2}, 
\\ 
|u|_{A,\zeta} := & \bigg(\bigg(\sup_{v \in H^1_0(\Omega) \setminus \{0\}}{\frac{\int_{\Omega} \! {{\bf a}u \cdot \nabla{v}} \, dx}{|||v|||}}\bigg)^2 + \sum_{E \in \mathcal{E}(\zeta)}{\Big(\beta h_E + \frac{h_E}{\varepsilon}\Big)||[u]||^2_{L^2(E)}}\bigg)^{1/2}.
\end{aligned}
\end{equation}
We note that $|||\cdot|||_{\zeta}$ and $|\cdot|_{A,\zeta}$ define norms on $H^1_0(\Omega)+V_h$. When no confusion is likely to occur we shall suppress the dependence on the mesh $\zeta$ and write $|||\cdot|||$ and $|\cdot|_{A}$. 

Throughout this work, the symbols $\lesssim$ and $\gtrsim$ are used to denote inequalities true up to a positive constant independent of $\varepsilon$, $\beta$, $u$, and $u_h$. 

Let $t\in (0,T]$ be fixed. It is easy to see that the bilinear form $B(t;\cdot,\cdot)$ is coercive on $H^1_0(\Omega)$, viz.,
\begin{equation}\label{coercivity}
B(t;v,v) \geq |||v|||^2,
\end{equation}
for all  $v \in H^1_0(\Omega)$, and is continuous in the following sense:
\begin{equation}\label{continuity}
B(t;w,v) \lesssim (|||w|||+|w|_{A})|||v|||,
\end{equation}
for all $w \in H^1_0(\Omega) + V_h$ and  $v \in H^1_0(\Omega)$. Moreover, the discrete bilinear form is coercive for $v_h$ in $V_h$ with respect to the $|||\cdot|||$ norm, viz.,
\begin{equation}
\label{FEcoercive}
B(t;v_h,v_h) + K_h(v_h,v_h) \gtrsim |||v_h|||^2
\end{equation}
Next, we introduce the following notation which will be used to define the a posteriori estimators:
\begin{equation}
{\alpha}_K := \min (h_K{\varepsilon}^{-\frac{1}{2}},{\beta}^{-\frac{1}{2}}),\quad {\alpha}_E := \min (h_E{\varepsilon}^{-\frac{1}{2}},{\beta}^{-\frac{1}{2}}),\quad {\alpha}_T = \min({\varepsilon}^{-\frac{1}{2}},{\beta}^{-\frac{1}{2}}).
\end{equation}
An a posteriori estimator for the stationary problem inspired by \cite{SZ09} will be utilised in our analysis. More specifically, we have the following result whose proof is completely analogous to the one of Theorem 3.2 in \cite{SZ09} and is therefore omitted for brevity.

\begin{theorem}\label{elliptic_apost}
For a given $t \in (0,T]$, let $u^s$ be such that
\[
B(t;u^s,v)=(f,v)\qquad \forall v\in H^1_0(\Omega),
\]
and consider $u^s_h\in V_h$ such that
\[
B(t;u^s_h,v_h)+K_h(u^s_h,v_h)=(f,v_h) \qquad\forall v_h\in V_h.
\]
Then the following a posteriori bound holds:
\begin{equation}\label{apost_stationary}
\begin{aligned}
\notag
(||| u^s - u^s_h |||&+|u^s-u^s_h|_A)^2 \lesssim \sum_{K\in \zeta} {\alpha}^2_K ||f + \epsilon \Delta u^s_h-{\bf a} \cdot \nabla u^s_h - b u^s_h||_{L^2(K)}^2 \\
&+\sum_{E\in \mathcal{E}^{int}(\zeta)} \epsilon^{\frac{3}{2}}{\alpha}_E|| [\nabla u^s_h] ||_{L^2(E)}^2 
+ \sum_{E\in \mathcal{E}(\zeta)} \Big(\frac{\gamma \epsilon}{h_E}+\beta h_E+\frac{h_E}{\epsilon}\Big)|| [ u^s_h] ||_{L^2(E)}^2.
\end{aligned}
\end{equation}
\end{theorem}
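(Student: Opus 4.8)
The plan is to follow the standard residual-based robust a posteriori framework, the principal difficulty being that the dG solution $u^s_h$ is nonconforming, i.e.\ $u^s_h\notin H^1_0(\Omega)$, so the error $e:=u^s-u^s_h$ cannot be inserted directly into the coercivity/continuity relations \eqref{coercivity}--\eqref{continuity}. To circumvent this I would first split off the nonconformity: let $u^c_h:=E_h u^s_h\in V_h\cap H^1_0(\Omega)$ be the averaging (Oswald) interpolant of $u^s_h$ onto the conforming subspace, set $\eta:=u^s_h-u^c_h$ and $e^c:=u^s-u^c_h\in H^1_0(\Omega)$, so that $e=e^c-\eta$. By the triangle inequality it then suffices to bound $|||e^c|||+|e^c|_A$ and $|||\eta|||+|\eta|_A$ separately. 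The remainder $\eta$ is handled by the approximation properties of the averaging operator, $\sum_K\|\eta\|^2_{L^2(K)}\lesssim\sum_E h_E\|[u^s_h]\|^2_{L^2(E)}$ and $\sum_K\|\nabla\eta\|^2_{L^2(K)}\lesssim\sum_E h_E^{-1}\|[u^s_h]\|^2_{L^2(E)}$; weighting these by $\varepsilon$ and $\beta$ as in $|||\cdot|||$, using $[\eta]=[u^s_h]$ (since $u^c_h$ is continuous), and controlling the convective dual part of $|\eta|_A$ through $\int_\Omega{\bf a}\eta\cdot\nabla v\le\|{\bf a}\|_\infty\varepsilon^{-1/2}\|\eta\|\,|||v|||$, one obtains
\[
|||\eta|||^2+|\eta|_A^2\lesssim\sum_{E\in\mathcal{E}(\zeta)}\Big(\frac{\gamma\varepsilon}{h_E}+\beta h_E+\frac{h_E}{\varepsilon}\Big)\|[u^s_h]\|^2_{L^2(E)},
\]
which is precisely the last group of terms in the estimator (the diffusive $\varepsilon/h_E$ is absorbed into $\gamma\varepsilon/h_E$ as $\gamma>1$).

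For the conforming part, introduce the residual functional $\mathcal{R}(v):=(f,v)-B(t;u^s_h,v)$ for $v\in H^1_0(\Omega)$, where the broken form $B$ acts element-wise. Using $B(t;u^s,v)=(f,v)$ together with bilinearity and the reduction of $B$ to its $H^1_0$ form on conforming arguments yields the identity $B(t;e^c,v)=\mathcal{R}(v)+B(t;\eta,v)$. Testing with $v=e^c$ and invoking coercivity \eqref{coercivity} and continuity \eqref{continuity} gives
\[
|||e^c|||^2\le B(t;e^c,e^c)=\mathcal{R}(e^c)+B(t;\eta,e^c)\lesssim\big(\|\mathcal{R}\|_*+|||\eta|||+|\eta|_A\big)|||e^c|||,
\]
where $\|\mathcal{R}\|_*:=\sup_{v\in H^1_0(\Omega)\setminus\{0\}}\mathcal{R}(v)/|||v|||$, whence $|||e^c|||\lesssim\|\mathcal{R}\|_*+|||\eta|||+|\eta|_A$. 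The convective seminorm $|e^c|_A$ is then recovered from the same quantities: since $[e^c]=0$, integration by parts followed by the definition of $B$ on $H^1_0$ gives $\int_\Omega{\bf a}e^c\cdot\nabla v\,dx=-B(t;e^c,v)+\int_\Omega\varepsilon\nabla e^c\cdot\nabla v\,dx+\int_\Omega(b-\nabla\cdot{\bf a})e^c v\,dx$. The diffusive term is bounded by $|||e^c|||\,|||v|||$, the zero-order term by $c_*|||e^c|||\,|||v|||$ by virtue of $\|-\nabla\cdot{\bf a}+b\|_{C(0,T;L^\infty(\Omega))}\le c_*\beta$ together with $\beta\|w\|^2\le|||w|||^2$, and $B(t;e^c,v)=\mathcal{R}(v)+B(t;\eta,v)$ is estimated as above; hence $|e^c|_A\lesssim|||e^c|||+\|\mathcal{R}\|_*+|||\eta|||+|\eta|_A$.

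It remains to bound the dual residual norm $\|\mathcal{R}\|_*$ by the element and diffusive-flux terms, and this is the technical core as well as the main obstacle, since it is here that $\varepsilon$-robustness must be secured. For $v\in H^1_0(\Omega)$ I would write $\mathcal{R}(v)=\mathcal{R}(v-I_hv)+\mathcal{R}(I_hv)$ with $I_hv\in V_h\cap H^1_0(\Omega)$ a quasi-interpolant. Since $I_hv$ is conforming, the discrete equation \eqref{dg_sd} reduces $\mathcal{R}(I_hv)$ to $K_h(u^s_h,I_hv)=-\sum_E\int_E\{\varepsilon\nabla I_hv\}\cdot[u^s_h]$, a solution-jump contribution absorbed (via a trace/inverse estimate and Cauchy--Schwarz) into the last estimator group. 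Element-wise integration by parts in $\mathcal{R}(v-I_hv)$ exposes the interior residual $f+\varepsilon\Delta u^s_h-{\bf a}\cdot\nabla u^s_h-bu^s_h$, the diffusive flux jumps $\varepsilon[\nabla u^s_h]$, and convective edge terms; applying the robust interpolation and trace estimates $\|v-I_hv\|_{L^2(K)}\lesssim\alpha_K|||v|||_{\omega_K}$ and $\|v-I_hv\|_{L^2(E)}\lesssim\varepsilon^{-1/4}\alpha_E^{1/2}|||v|||_{\omega_E}$, followed by Cauchy--Schwarz and finite overlap of patches, produces exactly the weights $\alpha_K^2$ and $\varepsilon^{3/2}\alpha_E$ of the first two estimator groups. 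Collecting the three estimates and squaring completes the bound. The delicate points, exactly as in \cite{SZ09}, are the construction of the quasi-interpolant carrying $\varepsilon$- and $\beta$-weighted (hence $\alpha$-calibrated) local estimates and the structural hypothesis $\|-\nabla\cdot{\bf a}+b\|\le c_*\beta$; together these are what render every constant above independent of the P\'eclet number.
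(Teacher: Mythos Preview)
Your proof sketch is correct and follows precisely the approach of Sch\"otzau and Zhu \cite{SZ09}, which is exactly what the paper defers to (the paper omits the proof entirely, stating it is ``completely analogous to the one of Theorem~3.2 in \cite{SZ09}''). The decomposition $u^s_h=u^c_h+\eta$ via the averaging interpolant, the jump bounds for $\eta$ (cf.\ \eqref{nonconforming_bound}), the Galerkin-orthogonality reduction $\mathcal{R}(I_hv)=K_h(u^s_h,I_hv)$, and the $\alpha$-weighted quasi-interpolation estimates are all the standard ingredients of that argument, so nothing further is needed.
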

  \end{section}
 
 \begin{section}{An a posteriori bound for the semi-discrete method}\label{sd_apost_sec}
 To highlight the main ideas, we begin by deriving an a posteriori bound for the semi-discrete problem.
 \begin{definition}
\label{reconstrdef1}
For each $t\in(0,T]$, we define \emph{the elliptic reconstruction $w \in H^1_0(\Omega)$} to be the (unique) solution of the problem
\begin{equation}\label{reconstruction_sd}
\notag
B(t;w,v)=(f-\frac{\partial{u_h}}{\partial{t}},v) \qquad \forall v \in H^1_0(\Omega).
\end{equation}
\end{definition}

\begin{remark}\label{reconstruction_remark}
The dG discretisation of Definition \ref{reconstrdef1} is to find a function $w_h \in C^{0,1}(0,T,V_h)$ such that for each $t \in (0,T]$ we have
\[
 B(t;w_h,v_h) + K_h(w_h,v_h)=(f-\frac{\partial{u_h}}{\partial{t}},v_h) \qquad \forall v_h \in V_h.
 \]
This, in conjunction with \eqref{dg_sd} and \eqref{FEcoercive}, implies that $w_h=u_h$. Thus, $|||w-u_h|||+|w-u_h|_{A}$ can be estimated using Theorem \ref{elliptic_apost}. 
\end{remark}

We decompose the error as follows:
 \begin{equation}
  e=u-u_h=\rho+\theta,\ \text{ with }\  \rho := u-w\  \text{ and }\ \theta := w-u_h.
 \end{equation}
 The dG solution, $u_h$, admits a decomposition into a conforming part $u_{h,c} \in H^1_0(\Omega) \cap V_h$ and a non-conforming part $u_{h,d} \in V_h$ with $u_h=u_{h,c}+u_{h,d}$, such that:
\begin{equation}\label{nonconforming_bound}
\begin{aligned}
|||u_{h,d}|||^2 +|u_{h,d}|_{A}^2 & \lesssim \sum_{E \in \mathcal{E}(\zeta)}{\Big(\frac{\varepsilon \gamma}{h_E} + \beta{h_E}+\frac{h_E}{\varepsilon}\Big)||[u_h]||^2_{L^2(E)}},
\\
||\frac{\partial{u_{h,d}}}{\partial{t}}||^2 & \lesssim \sum_{E \in \mathcal{E}(\zeta)}{h_E||[\frac{\partial{u_h}}{\partial{t}}]||^2_{L^2(E)}}.
\end{aligned}
\end{equation} 
We refer to \cite{SZ09} for proof of these estimates which are based on respective constructions by  \cite{KP03} and \cite{V05II}. We further define $e_c:=u-u_{h,c}$ and $\theta_c := w - u_{h,c}$.

\begin{lemma}\label{error_eq_simple}
For each $t\in(0,T]$ and for all $v \in H^1_0(\Omega)$ we have
\[
(\frac{\partial{e}}{\partial{t}},v)+B(t;\rho,v)=0.
\] 
\end{lemma}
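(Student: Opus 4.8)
The plan is to derive the error equation directly by subtracting the reconstruction identity from the weak formulation, exploiting the bilinearity of $B(t;\cdot,\cdot)$ in its function arguments together with the fact that the elliptic reconstruction is designed precisely to absorb the residual $f - \partial u_h/\partial t$.

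First I would recall the weak problem \eqref{weakform}, which states that for each $t \in (0,T]$ and all $v \in H^1_0(\Omega)$,
\[
(\frac{\partial u}{\partial t},v) + B(t;u,v) = (f,v).
\]
Next I would rewrite the defining relation of the elliptic reconstruction from Definition \ref{reconstrdef1} in the equivalent form
\[
(\frac{\partial u_h}{\partial t},v) + B(t;w,v) = (f,v),
\]
obtained simply by transferring the term $(\partial u_h/\partial t, v)$ to the left-hand side. Crucially, both identities hold for every $v \in H^1_0(\Omega)$ and share the same right-hand side $(f,v)$, since both problems are posed over the same conforming test space.

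Subtracting the second identity from the first, I would obtain
\[
(\frac{\partial u}{\partial t} - \frac{\partial u_h}{\partial t}, v) + B(t;u,v) - B(t;w,v) = 0.
\]
Using linearity of the inner product and of $B(t;\cdot,v)$, together with the decomposition $e = u - u_h$ and the definition $\rho = u - w$, the left-hand side collapses to $(\partial e/\partial t, v) + B(t;\rho,v)$, which is exactly the claimed identity.

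I do not anticipate any genuine obstacle: the result is an algebraic consequence of the two defining equations, and it holds verbatim precisely because both problems are tested against the conforming space $H^1_0(\Omega)$, so no inconsistency terms arising from the nonconforming form $K_h$ enter (indeed $K_h$ is not even well-defined on $H^1_0(\Omega)$, as noted earlier). The only point warranting a word of care is that $\partial e/\partial t$ be meaningful as an element tested against $v$ for each fixed $t$; this is guaranteed by the regularity $u \in H^1(0,T;L^2(\Omega))$ and $u_h \in C^{0,1}(0,T;V_h)$, which ensure $\partial u/\partial t$ and $\partial u_h/\partial t$, and hence $\partial e/\partial t$, lie in $L^2(\Omega)$ almost everywhere in time.
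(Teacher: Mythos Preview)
Your proof is correct and follows exactly the route indicated in the paper, which simply states that the identity follows directly from Definition~\ref{reconstrdef1} and \eqref{weakform}. You have merely spelled out the subtraction and the use of bilinearity that the paper leaves implicit.
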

\begin{proof}
This follows directly from Definition \ref{reconstrdef1} and  \eqref{weakform}.
\end{proof}

We define the \emph{initial condition estimator}, $\tilde{\eta}_I$, by
\begin{equation}
\tilde{ \eta}^2_I = ||u_0-u_h(0)||^2 + \sum_{E \in \mathcal{E}({\zeta})}{h_E||[u_h(0)]||^2_{L^2(E)}},
\end{equation}
while the \emph{spatial estimator}, $\tilde{\eta}_S$, is given by
\begin{equation}
\tilde{\eta}_S^2= \int_0^T \!\tilde{\eta}_{S_1}^2\, dt+\min\big\{\Big(\int_0^T \! \tilde{\eta}_{S_2} \, dt \Big)^2,{\alpha}^2_T\int_0^T \! \tilde{\eta}_{S_2}^2 \, dt\big\},
\end{equation}
where
\begin{equation}
\begin{aligned}
\tilde{\eta}_{S_1}^2 = & \sum_{K \in \zeta}{\alpha}^2_K||f-\frac{\partial{u_h}}{\partial{t}}+\varepsilon\Delta{u_h}-{\bf a}\cdot \nabla{u_h}-bu_h||^2_{L^2(K)}
+\sum_{E \in \mathcal{E}^{int}(\zeta)}{{\varepsilon}^{\frac{3}{2}}{\alpha}_E||[\nabla{u_h}]||^2_{L^2(E)}}\\
&+\sum_{E \in \mathcal{E}(\zeta)}{\Big(\frac{\varepsilon \gamma}{h_E} + \beta{h_E}+\frac{h_E}{\varepsilon}\Big)||[u_h]||^2_{L^2(E)}},
\end{aligned}
\end{equation}
is the residual term and 
\begin{equation}
 \tilde{\eta}_{S_2}^2=  \sum_{E \in \mathcal{E}(\zeta)}{h_E||[\frac{\partial{u_h}}{\partial{t}}]||^2_{L^2(E)}},
\end{equation}
is the rate of change of non-conformity. 
\begin{theorem}\label{apost_sd_thm}
 The error $e$ of the semi-discrete dG method~\eqref{dg_sd} satisfies the bound
\begin{equation}
\notag
\int_0^T \! |||e|||^2 \, dt \lesssim \tilde{\eta}^2_I+\tilde{\eta}^2_S.
\end{equation}
\end{theorem}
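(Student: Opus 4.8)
The plan is to work from the splitting $e=\rho+\theta$ of Theorem~\ref{apost_sd_thm}, but to organise the energy argument around the \emph{conforming} error $e_c=u-u_{h,c}\in H^1_0(\Omega)$ rather than around $\rho$ itself. The reason is the following: by Lemma~\ref{error_eq_simple}, $\rho$ is driven by the forcing $-\partial_t\theta$, whose conforming component $\partial_t\theta_c$ (with $\theta_c=w-u_{h,c}$) admits no a posteriori control. Testing instead against $e_c$ moves the elliptic/convective residual into a purely \emph{spatial} term that can be treated by continuity~\eqref{continuity}, and leaves only the \emph{nonconforming} time derivative $\partial_t u_{h,d}$ as a genuine cross term, which is precisely the quantity measured by $\tilde{\eta}_{S_2}$.

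First I would reduce the target to $e_c$ and the nonconformity. Since $e=e_c-u_{h,d}$, the triangle inequality gives $|||e|||\le|||e_c|||+|||u_{h,d}|||$, and the first line of \eqref{nonconforming_bound} bounds $|||u_{h,d}|||^2$ by jump terms already contained in $\tilde{\eta}_{S_1}^2$; it thus suffices to control $\int_0^T|||e_c|||^2$. Testing Lemma~\ref{error_eq_simple} with $v=e_c$ and using $\partial_t e=\partial_t e_c-\partial_t u_{h,d}$ together with $\rho=e_c-\theta_c$ yields the energy identity $\tfrac12\tfrac{d}{dt}\|e_c\|^2 + B(t;e_c,e_c) = (\partial_t u_{h,d},e_c) + B(t;\theta_c,e_c)$, where coercivity~\eqref{coercivity} replaces $B(t;e_c,e_c)$ by $|||e_c|||^2$ on the left.

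Next I would estimate the two right-hand terms robustly in the P\'eclet number. For the elliptic term, continuity~\eqref{continuity} gives $B(t;\theta_c,e_c)\lesssim(|||\theta_c|||+|\theta_c|_A)|||e_c|||$; writing $\theta_c=\theta+u_{h,d}$, both Theorem~\ref{elliptic_apost} applied to the reconstruction (via Remark~\ref{reconstruction_remark}, so $w_h=u_h$) and \eqref{nonconforming_bound} show $|||\theta_c|||+|\theta_c|_A\lesssim\tilde{\eta}_{S_1}$. For the cross term I would use Cauchy--Schwarz, the second line of \eqref{nonconforming_bound} to get $\|\partial_t u_{h,d}\|\lesssim\tilde{\eta}_{S_2}$, and the robust Poincar\'e-type bound $\|e_c\|\lesssim\alpha_T|||e_c|||$ (which follows from $\beta\|e_c\|^2\le|||e_c|||^2$ and $\varepsilon\|\nabla e_c\|^2\le|||e_c|||^2$ since $e_c\in H^1_0(\Omega)$), obtaining $(\partial_t u_{h,d},e_c)\lesssim\alpha_T\tilde{\eta}_{S_2}|||e_c|||$. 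Young's inequality then absorbs a multiple of $|||e_c|||^2$ into the left; crucially no $\|e_c\|^2$ survives on the right, so no Gronwall argument is required, and integrating in time gives $\int_0^T|||e_c|||^2\lesssim\|e_c(0)\|^2+\int_0^T\tilde{\eta}_{S_1}^2+\alpha_T^2\int_0^T\tilde{\eta}_{S_2}^2$.

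To recover the minimum defining $\tilde{\eta}_S^2$ I would rerun the final step retaining $\|e_c\|$ in $L^\infty(0,T)$: estimating $\int_0^T(\partial_t u_{h,d},e_c)\le(\sup_{[0,T]}\|e_c\|)\int_0^T\tilde{\eta}_{S_2}$ and absorbing $\sup_{[0,T]}\|e_c\|^2$ produces the alternative bound $\big(\int_0^T\tilde{\eta}_{S_2}\big)^2$, so taking whichever is smaller yields the stated $\min$. The initial term is handled by $e_c(0)=(u_0-u_h(0))+u_{h,d}(0)$ together with the $L^2$ nonconformity estimate of the same nature as \eqref{nonconforming_bound}, giving $\|e_c(0)\|^2\lesssim\tilde{\eta}_I^2$; combining with the reduction of $|||e|||$ completes the bound. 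The main obstacle is conceptual rather than computational: one must centre the argument on $e_c$ so that the only surviving temporal cross term is the controllable $\partial_t u_{h,d}$, and every absorption must be performed with the $\alpha_T$-weighting so that all constants remain independent of $\varepsilon$.
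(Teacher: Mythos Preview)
Your proposal is correct and follows essentially the same route as the paper: test the error relation of Lemma~\ref{error_eq_simple} with $v=e_c$, use coercivity/continuity to reach the differential inequality, and obtain the two branches of the minimum via the Poincar\'e--Friedrichs estimate $\|e_c\|\lesssim\alpha_T|||e_c|||$ for the $\alpha_T^2\int\tilde\eta_{S_2}^2$ term and an $L^\infty$-in-time argument for the $(\int\tilde\eta_{S_2})^2$ term. The only step you should make precise is the phrase ``absorbing $\sup_{[0,T]}\|e_c\|^2$'': the paper does this by integrating the differential inequality over $[0,T_0]$, where $T_0$ realises $\max_{[0,T]}\|e_c(t)\|$, so that $\|e_c(T_0)\|^2$ genuinely appears on the left-hand side and can absorb the Young contribution --- integrating only on $[0,T]$ leaves $\|e_c(T)\|^2$ there, which is not enough.
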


\begin{proof}  Choosing $v=e_c$ in Lemma \ref{error_eq_simple} gives
\begin{equation}
(\frac{\partial{e_c}}{\partial{t}},e_c)+B(t;e_c,e_c)=(\frac{\partial{u_{h,d}}}{\partial{t}},e_c)+B(t;\theta_c,e_c).
\end{equation}
Using \eqref{coercivity},~\eqref{continuity}, and Young's inequality we arrive to
\begin{equation}\label{first_sd_bound}
 \frac{d}{dt}(||e_c||^2)+|||e_c|||^2 \lesssim (|||\theta_c|||+|\theta_c|_{A})^2+||\frac{\partial{u_{h,d}}}{\partial{t}}||\, ||e_c||.
\end{equation}
Let $ E_c:=||e_c(T_0)||$ with $T_0\in[0,T]$ such that $||e_c(T_0)||= \max_{0 \leq t \leq T}{||e_c(t)||}$ then integrating on $[0,T]$ gives
\begin{equation}\label{l2h1_bound_sd}
 \int_0^T \! {|||e_c|||^2 \, dt}\lesssim ||e_c(0)||^2+\int_0^T \! {(|||\theta_c|||+|\theta_c|_{A})^2 \, dt}+E_c \int_0^T \! {||\frac{\partial{u_{h,d}}}{\partial{t}}|| \, dt}.
\end{equation}
Integrating \eqref{first_sd_bound} on $[0,T_0]$ and using Young's inequality 
once again gives
\begin{equation}\label{Ec_bound_sd}
 (E_c)^2 \lesssim ||e_c(0)||^2+\int_0^{T} \! {(|||\theta_c|||+|\theta_c|_{A})^2 \, dt}+ \Big(\int_0^{T} \! {||\frac{\partial{u_{h,d}}}{\partial{t}}|| \, dt}\Big)^2.
\end{equation}
Using \eqref{Ec_bound_sd} to bound $E_c$ on the right-hand side of \eqref{l2h1_bound_sd} yields
\begin{equation}\label{error_eq_first}
 \int_0^T \! {|||e_c|||^2 \, dt}\lesssim ||e_c(0)||^2+\int_0^T \! {(|||\theta_c|||+|\theta_c|_{A})^2 \, dt}+ \Big(\int_0^T \! {||\frac{\partial{u_{h,d}}}{\partial{t}}|| \, dt \Big)^2}.
\end{equation}
Going back to \eqref{first_sd_bound}, using the Poincar\'{e}-Friedrichs inequality on $||e_c||$ and working as above gives
\begin{equation}\label{error_eq_second}
 \int_0^T \! {|||e_c|||^2 \, dt}\lesssim ||e_c(0)||^2+\int_0^T \! {(|||\theta_c|||+|\theta_c|_{A})^2 \, dt}+ {\alpha}_T^2\int_0^T \! {||\frac{\partial{u_{h,d}}}{\partial{t}}||^2 \, dt}.
\end{equation}
Using the triangle inequality, we have
\begin{equation}\label{triangle_last}
|||e||| \le |||e_c|||+|||u_{h,d}|||\quad \text{ and }\quad |||\theta_c|||+|\theta_c|_{A} \le |||\theta|||+|\theta|_{A}+|||u_{h,d}|||+|u_{h,d}|_{A}.
\end{equation}
Combining \eqref{error_eq_first}, \eqref{error_eq_second}, and \eqref{triangle_last} yields
\begin{equation}
\begin{aligned}
 \int_0^T \! {|||e|||^2 \, dt}  \lesssim &\ ||e_c(0)||^2+\int_0^T \! {(|||\theta|||+|\theta|_{A})^2 \, dt}+\int_0^T \! {(|||u_{h,d}|||^2+|u_{h,d}|^2_{A}) \, dt} \\  & + \min\Big\{\Big(\int_0^T \! {||\frac{\partial{u_{h,d}}}{\partial{t}}|| \, dt\Big)^2},{\alpha}_T^2\int_0^T \! {||\frac{\partial{u_{h,d}}}{\partial{t}}||^2 \, dt}\Big\}.
\end{aligned}
\end{equation}
The proof then follows directly from Theorem \ref{elliptic_apost} and \eqref{nonconforming_bound}.

\end{proof}

\end{section}

\begin{section}{An a posteriori bound for the fully-discrete method}\label{fd_apost_sec}
We now continue by applying to the fully-discrete setting the general framework presented in the previous section.

\begin{definition}\label{ellipticreconstr2} We define the \emph{elliptic reconstruction} $w^{k+1} \in H^1_0(\Omega)$ to be the unique solution of the elliptic problem
\begin{equation}\label{ell_rec_fd}
\notag
B(t^{k+1};w^{k+1},v)=(f^{k+1}-\frac{u_h^{k+1}-u_h^k}{\tau_{k+1}},v) \qquad
\forall v \in H^1_0(\Omega).
\end{equation}
\end{definition}

At each time step $k$, we decompose the dG solution $u_h^k$ into a conforming part $u_{h,c}^k \in H^1_0(\Omega) \cap V_h^k$ and a non-conforming part $u_{h,d}^k \in V_h^k$ such that $u_h^k = u_{h,c}^k + u_{h,d}^k$. Given $t \in (t^k,t^{k+1}]$, we (re)define $u_h(t)$ to be the linear interpolant with respect to $t$ of the values $u_h^k$ and $u_h^{k+1}$, viz.,
\begin{equation}
u_h(t):=l_k(t)u_h^k+l_{k+1}(t)u_h^{k+1},
\end{equation}
where $\{l_k, l_{k+1}\}$ denotes the standard linear Lagrange interpolation basis defined on the interval $[t^k,t^{k+1}]$.
 We define $u_{h,c}(t)$ and $u_{h,d}(t)$ analogously. We can then decompose the error $e=u-u_h=e_c-u_{h,d}$ where $e_c=u-u_{h,c}$. It will also be useful to define $\theta^{k} = w^{k}-u_h^{k} = \theta_c^{k}-u_{h,d}^{k}$.

\begin{lemma}\label{error_eq_simple_fd}
Given $t \in (t^k,t^{k+1}]$ we have 
\begin{equation}\label{error_relation_fd}
\notag
(\frac{\partial{e}}{\partial{t}},v)+B(t;e_c,v)=B(t^{k+1};w^{k+1},v)-B(t,u_{h,c},v)+(f-f^{k+1},v) \qquad \forall v \in H^1_0(\Omega).
\end{equation} 
\end{lemma}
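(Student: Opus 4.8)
The plan is to assemble the identity from three ingredients: the continuous weak formulation \eqref{weakform} satisfied by the exact solution $u$, the defining relation for the elliptic reconstruction in Definition \ref{ellipticreconstr2}, and the bilinearity of $B(t;\cdot,\cdot)$ in its last two arguments. The only genuinely non-mechanical step is to reconcile the time derivative appearing in $e$ with the backward difference quotient that enters the reconstruction.

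First I would observe that, since $u_h(t)=l_k(t)u_h^k+l_{k+1}(t)u_h^{k+1}$ is the linear-in-time interpolant on $(t^k,t^{k+1}]$, its time derivative is constant on this interval and equals $\frac{\partial u_h}{\partial t}=\frac{u_h^{k+1}-u_h^k}{\tau_{k+1}}$. Consequently, the defining relation of Definition \ref{ellipticreconstr2} may be rewritten, for all $v\in H^1_0(\Omega)$, as $B(t^{k+1};w^{k+1},v)=(f^{k+1}-\frac{\partial u_h}{\partial t},v)$, that is, $(\frac{\partial u_h}{\partial t},v)=(f^{k+1},v)-B(t^{k+1};w^{k+1},v)$.

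Next, writing $\frac{\partial e}{\partial t}=\frac{\partial u}{\partial t}-\frac{\partial u_h}{\partial t}$ and testing against $v\in H^1_0(\Omega)$, I would use \eqref{weakform} to substitute $(\frac{\partial u}{\partial t},v)=(f,v)-B(t;u,v)$ and the rewritten reconstruction to substitute for $(\frac{\partial u_h}{\partial t},v)$. This yields $(\frac{\partial e}{\partial t},v)=(f-f^{k+1},v)-B(t;u,v)+B(t^{k+1};w^{k+1},v)$. Finally, adding $B(t;e_c,v)$ to both sides and using bilinearity together with $e_c=u-u_{h,c}$, the relation $B(t;e_c,v)=B(t;u,v)-B(t;u_{h,c},v)$ cancels the $-B(t;u,v)$ term and leaves $-B(t;u_{h,c},v)$, producing precisely the claimed identity. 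The main (and essentially the only) obstacle is the first observation linking the interpolant's time derivative to the backward difference quotient; once that bridge is in place, the remainder is routine substitution and cancellation.
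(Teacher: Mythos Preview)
Your proof is correct and follows precisely the approach indicated in the paper, which merely states that the identity follows from Definition~\ref{ellipticreconstr2} and \eqref{weakform} by rearrangement; you have simply made the substitutions and the key observation $\partial_t u_h = (u_h^{k+1}-u_h^k)/\tau_{k+1}$ explicit.
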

\begin{proof}
This follows from Definition \ref{ellipticreconstr2} and \eqref{weakform} and by rearranging the resulting equation. 
\end{proof}

\noindent Before proving the a posteriori bounds for the fully-discrete method, we introduce the error estimators. We begin by defining the \emph{initial condition estimator}, $\eta_I$, by
\begin{equation}
 \eta^2_I = ||u_0-u_h^0||^2 + \sum_{E \in \mathcal{E}({\zeta^0})}{h_E||[u_h^0]||^2_{L^2(E)}},
\end{equation}
while the \emph{spatial estimator}, $\eta_S$, is given by
\begin{equation}
\eta^2_S =  \sum_{j=0}^{n-1}\tau_{j+1} \eta_{S_1,j+1}^2+ \min\bigg\{\Big(\sum_{j=0}^{n-1} \int_{t^j}^{t^{j+1}} \! \eta_{S_2,j+1} \, dt \bigg)^2, {\alpha}_T^2 \sum_{j=0}^{n-1} \int_{t^j}^{t^{j+1}} \! \eta_{S_2,j+1}^2 \, dt \Big\},
\end{equation}
where
\begin{equation}
\begin{aligned}
 \eta_{S_1,j+1}^2 &=\hspace{-2mm} \sum_{K \in \zeta^{j}\cup\zeta^{j+1}}{{\alpha}_K^2||f^{j+1}-\frac{u_h^{j+1}-u_h^j}{\tau_{j+1}}+\varepsilon\Delta u_h^{j+1}}-{\bf a}^{j+1} \cdot \nabla u_h^{j+1}-b^{j+1}u_h^{j+1}||_{L^2(K)}^2
\\
&\hspace{-8mm}+\hspace{-2mm}\sum_{E \in \mathcal{E}^{int}(\zeta^{j}\cup\zeta^{j+1})}{\varepsilon}^{\frac{3}{2}}{\alpha}_E ||[\nabla u_h^{j+1}]||^2_{L^2(E)}
+
 \sum_{E \in \mathcal{E}(\zeta^{j}\cup\zeta^{j+1})}\Big(\frac{\gamma\varepsilon}{h_E}+\beta h_E +\frac{h_E }{\varepsilon}\Big)||[u_h^{j+1}]||^2_{L^2(E)}
\\
&\hspace{-8mm}+\hspace{-2mm}\sum_{E \in \mathcal{E}(\zeta^j \cup\zeta^{j+1})}\Big(\frac{\gamma\varepsilon}{h_E}+\beta h_E +\frac{h_E }{\varepsilon}\Big)||[u_h^{j+1}-u_h^j]||^2_{L^2(E)} ,
\end{aligned}
\end{equation}
and
\begin{equation}
\begin{aligned}
 \eta_{S_2,j+1}^2 = & \sum_{E \in \mathcal{E}(\zeta^j \cup \zeta^{j+1})}h_E||[\frac{u_h^{j+1}-u_h^j}{\tau_{j+1}}]||^2_{L^2(E)}\\
 +
 & \sum_{E \in \mathcal{E}^{int}(\zeta^j \cup \zeta^{j+1})} h_E^{-1} ||[l_{j}(t){\bf a}(u_h^{j+1}-u_h^j)+({\bf a}^{j+1}-{\bf a})u_h^{j+1}]||^2_{L^2(E)}.
\end{aligned}
\end{equation}
The final term is the \emph{time (or temporal) estimator}, $\eta_T$, given by 
\begin{equation}
\eta^2_T = \frac{1}{4}\sum_{j=0}^{n-1} \tau_{j+1} \eta_{T_1,j+1}^2  + \min\Big\{\bigg(\sum_{j=0}^{n-1} \int_{t^j}^{t^{j+1}} \! \eta_{T_2,j+1} \, dt \bigg)^2, {\alpha}_T^2 \sum_{j=0}^{n-1} \int_{t^j}^{t^{j+1}} \! \eta_{T_2,j+1}^2\, dt \Big\}.
\end{equation}
where
\begin{equation}
 \eta_{T_1,j+1}^2 =  \sum_{K \in \zeta^j \cup \zeta^{j+1}}\epsilon||\nabla(u_h^{j+1}-u_h^j)||^2_{L^2(K)},
\end{equation}
and
\begin{equation}
\begin{aligned}
 \eta_{T_2,j+1}^2 =   \sum_{K \in \zeta^j \cup \zeta^{j+1}}||&l_{j}(t)({\bf a}\cdot \nabla(u_h^{j+1}-u_h^j)+b(u_h^{j+1}-u_h^j)) +f-f^{j+1}\\
&+({\bf a}^{j+1}-{\bf a})\cdot \nabla{u_h^{j+1}}+(b^{j+1}-b)u_h^{j+1}||^2_{L^2(K)}.
\end{aligned}
\end{equation}

\begin{theorem}\label{apost_fd_thm}
 The error $e$ of the fully-discrete method satisfies the bound
\begin{equation}
\sum_{j=0}^{n-1} \int_{t^j}^{t^{j+1}}\!|||e|||^2 \, dt \lesssim \eta^2_I+\eta^2_S+\eta^2_T.
\end{equation}
\end{theorem}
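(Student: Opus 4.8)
The plan is to mimic the structure of the semi-discrete proof (Theorem~\ref{apost_sd_thm}) as closely as possible, with the additional complication that the bilinear form, mesh, and data now vary with the time step. The starting point is Lemma~\ref{error_eq_simple_fd}. I would test the error equation with $v=e_c\in H^1_0(\Omega)$, use coercivity~\eqref{coercivity} on the left-hand side and continuity~\eqref{continuity} together with Young's inequality on the right-hand side, exactly as was done to obtain~\eqref{first_sd_bound}. The crucial difference is the right-hand side of Lemma~\ref{error_eq_simple_fd}, namely
\[
B(t^{k+1};w^{k+1},v)-B(t;u_{h,c},v)+(f-f^{k+1},v),
\]
which no longer collapses to a single reconstruction term as in the semi-discrete case. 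I would split this into a spatial part (comparing $w^{k+1}$ with the conforming part $u_{h,c}^{k+1}$ of the discrete solution at the endpoint) and a temporal part (accounting for the mismatch between $t^{k+1}$-data and the linear interpolant $u_{h,c}(t)$, including the $({\bf a}^{k+1}-{\bf a})$, $(b^{k+1}-b)$, and $(f-f^{k+1})$ differences). This decomposition is what produces the separation into $\eta_S$ and $\eta_T$.

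Next I would introduce the running maximum $E_c:=\max_{0\le t\le T}\|e_c(t)\|$ and carry out the same two-step argument as in the semi-discrete proof: integrate the pointwise inequality over $[0,T]$ to control $\int_0^T|||e_c|||^2\,dt$ in terms of $\|e_c(0)\|^2$, the accumulated spatial/temporal contributions, and $E_c$ times the integrated rate-of-change-of-nonconformity term; then integrate over $[0,T_0]$ (where $T_0$ realises the maximum) to bound $E_c^2$ by the same quantities, and substitute back. I would run this twice, once using Young's inequality directly on the $\|\partial_t u_{h,d}\|\,\|e_c\|$ term and once after invoking Poincar\'e--Friedrichs on $\|e_c\|$, to obtain both members of the $\min\{\cdots\}$ appearing in $\eta_S$ and $\eta_T$. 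Finally, the triangle inequality $|||e|||\le|||e_c|||+|||u_{h,d}|||$, together with the nonconformity bounds~\eqref{nonconforming_bound} applied on each mesh $\zeta^j\cup\zeta^{j+1}$ and the elliptic estimator of Theorem~\ref{elliptic_apost} applied to $|||\theta^{k+1}|||+|\theta^{k+1}|_A$, converts the $\theta$-terms and $u_{h,d}$-terms into the computable residuals constituting $\eta_{S_1}$, $\eta_{S_2}$, and $\eta_{T_1}$. The initial term $\|e_c(0)\|^2$ is absorbed into $\eta_I$.

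The main obstacle is the careful treatment of the temporal/data-variation terms. Each spatial or temporal integral must be estimated on the union mesh $\zeta^j\cup\zeta^{j+1}$ rather than on a single fixed mesh, since the solution changes representation across time steps; this is precisely why the estimators are summed over $K\in\zeta^j\cup\zeta^{j+1}$ and $E\in\mathcal{E}(\zeta^j\cup\zeta^{j+1})$. Concretely, to bound $B(t^{k+1};w^{k+1},v)-B(t;u_{h,c},v)$ I would add and subtract $B(t;u_h,v)$ (the piecewise discrete solution) and integrate by parts elementwise, so that the interior-layer and jump contributions are exposed and can be matched against the $\eta_{S_2}$ and $\eta_{T_2}$ integrands. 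The delicate point is ensuring that the convective and reactive coefficient differences are handled in a \emph{Pe\'clet-robust} way, i.e.\ that the scaling factors $\alpha_K,\alpha_E,\alpha_T$ and the $h_E$-weights emerge with exactly the right powers of $\varepsilon$, $\beta$, and $h$ so that no hidden dependence on $\varepsilon^{-1}$ survives. This is where the structure of the norms $|||\cdot|||$ and $|\cdot|_A$, and in particular the supremum definition of $|\cdot|_A$ that dualises the convective term, must be used to keep the continuity constant in~\eqref{continuity} independent of the P\'eclet number. The mechanics of the $\max/\min$ bookkeeping are routine once the semi-discrete template is in hand; the genuine work lies in this coefficient-difference estimation.
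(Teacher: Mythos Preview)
Your plan follows the paper's argument closely: the starting equation, the choice $v=e_c$, the add/subtract leading to a $\theta_c^{k+1}$ term plus temporal-mismatch terms, the two-pass $E_c$ argument, and the final use of Theorem~\ref{elliptic_apost} and~\eqref{nonconforming_bound} are all exactly as in the paper.

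There is, however, one technical device you have not identified, and without it the edge terms arising from elementwise integration by parts cannot be matched to the stated estimator. After integrating the convective part by parts, the edge contributions take the form
\[
\sum_{K}\int_{\partial K\setminus\partial\Omega}\bigl[l_k(t)\,{\bf a}(u_h^{k+1}-u_h^k)+({\bf a}^{k+1}-{\bf a})u_h^{k+1}\bigr]\,e_c\,ds.
\]
The paper does \emph{not} bound these directly via continuity~\eqref{continuity} and the $|\cdot|_A$-norm (that alternative is mentioned in the Remark following the proof and yields a different, $\varepsilon^{-1/2}$-weighted estimator). Instead, the paper introduces the $L^2$-projection $I_h^k e_c$ of $e_c$ onto the conforming subspace $H^1_0(\Omega)\cap V_h(\zeta^k\cup\zeta^{k+1})$ and splits each edge integral as a part against $e_c-I_h^k e_c$ and a part against $I_h^k e_c$. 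The first part is controlled by the approximation properties of $I_h^k$ and absorbed into $\eta_{S_1,k+1}|||e_c|||$; the second part uses the inverse trace estimate $\|I_h^k e_c\|_{L^2(E)}^2\lesssim h_E^{-1}\|I_h^k e_c\|_{L^2(K)}^2$ together with the $L^2$-stability $\|I_h^k e_c\|\le\|e_c\|$, which is precisely what produces the $h_E^{-1}$-weighted jump term in $\eta_{S_2,k+1}$ paired with $\|e_c\|$ (so that it can be fed into the $E_c$/Poincar\'e--Friedrichs machinery alongside $\|\partial_t u_{h,d}\|$). Your proposal gestures at ``interior-layer and jump contributions'' being ``matched against the $\eta_{S_2}$ and $\eta_{T_2}$ integrands'' but gives no mechanism for obtaining the $h_E^{-1}$ scaling against $\|e_c\|$; the projection split is that mechanism.
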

\begin{proof} From Lemma \ref{error_eq_simple_fd} we have
\begin{equation}
(\frac{\partial{e_c}}{\partial{t}},v)+B(t;e_c,v)=(\frac{\partial{u_{h,d}}}{\partial{t}},v)+B(t^{k+1};w^{k+1},v)-B(t,u_{h,c},v)+(f-f^{k+1},v),
\end{equation}
which upon straightforward manipulation and setting $v=e_c$ gives
\begin{equation}\label{error_eq_fd}
\begin{aligned}
\frac{d}{dt}(||e_c||^2)+B(t;e_c,e_c)= &(\frac{\partial{u_{h,d}}}{\partial{t}},e_c)+B(t^{k+1};\theta_c^{k+1},e_c)+l_{k}(t)B(t;u_{h,c}^{k+1}-u_{h,c}^k,e_c)\\ &+B(t^{k+1};u_{h,c}^{k+1},e_c)-B(t;u^{k+1}_{h,c},e_c)+(f-f^{k+1},e_c).
\end{aligned}
\end{equation}
Denoting by $I^k_he_c$ the $L^2$-projection of $e_c$ onto $H^1_0(\Omega) \cap V_h(\zeta^k \cup \zeta^{k+1})$, using \eqref{coercivity}, \eqref{continuity}, the Cauchy-Schwarz inequality, Young's inequality and the triangle inequality yields
\begin{equation}
\begin{aligned}
 \frac{d}{dt}(||e_c||^2)+|||e_c|||^2  \lesssim &\ ||\frac{\partial{u_{h,d}}}{\partial{t}}||||e_c||+(|||\theta^{k+1}|||+|\theta^{k+1}|_{A})^2+|||u_{h,d}|||^2+|u_{h,d}|_{A}^2\\&+T_1 +  T_2 + T_3 + T_4 + T_5,
\end{aligned}
\end{equation}
where
\begin{equation}
\begin{aligned}
T_1 :=\sum_{K \in \zeta^k \cup \zeta^{k+1}} & \int_K \! (f - f^{k+1} + ({\bf a}^{k+1}-{\bf a})\cdot \nabla u_h^{k+1} +(b^{k+1}-b)u_h^{k+1} \\ &+ l_{k}(t)({\bf a}\cdot \nabla(u_h^{k+1}-u_h^k)+b(u_h^{k+1}-u_h^k)))e_c \, dx,
\end{aligned}
\end{equation}
\begin{equation}
 T_2 :=l_{k}(t)\sum_{K \in \zeta^k \cup \zeta^{k+1}} \int_K \! \epsilon \nabla(u_h^{k+1}-u_h^k) \cdot \nabla{e_c} \, dx,
\end{equation}
\begin{equation}
 T_3 := \frac{1}{2}\sum_{K \in \zeta^k \cup \zeta^{k+1}} \int_{\partial K \backslash \partial\Omega} \! [({\bf a}^{k+1}-{\bf a})u_h^{k+1}](e_c - I^k_he_c) \, ds,
\end{equation}
\begin{equation}
 T_4 :=\frac{1}{2}l_{k}(t)\sum_{K \in \zeta^k \cup \zeta^{k+1}} \int_{\partial K \backslash \partial\Omega} \! [{\bf a}(u_h^{k+1}-u_h^k)](e_c - I^k_he_c) \, ds,
\end{equation}
and
\begin{equation}
 T_5 := \frac{1}{2}\sum_{K \in \zeta^k \cup \zeta^{k+1}} \int_{\partial K \backslash \partial\Omega} \! [l_{k}(t){\bf a}(u_h^{k+1}-u_h^k)+({\bf a}^{k+1}-{\bf a})u_h^{k+1}]I^k_he_c \, ds.
\end{equation}
Using the discrete and integral versions of the Cauchy-Schwarz inequality, we immediately have
\begin{equation}
T_1\le \eta_{T_2,k+1} ||e_c||,\qquad T_2\le \eta_{T_1,k+1} |||e_c|||.
\end{equation}
Further, using the approximation properties of $I_h^k$ we have
\begin{equation}
T_3+T_4\lesssim \eta_{S_1,k+1} |||e_c|||.
\end{equation}
Next, using the standard inverse estimate $||I_h^k e_c||_{L^2(E)}^2\lesssim h_E^{-1} ||I_h^k e_c||_{L^2(K)}^2$, for $E\subset \partial K$, and the stability of the $L^2$-projection $|| I_h^k e_c|| \le ||e_c||$, we deduce
\begin{equation}
T_5 \lesssim \eta_{S_2,k+1} || e_c ||.
\end{equation}
The proof then follows from Theorem \ref{elliptic_apost} and \eqref{nonconforming_bound} and by employing a bounding strategy identical to that used in Theorem \ref{apost_sd_thm}.
 \end{proof}
 
\begin{remark}
An alternative a posteriori bound can be show, using directly \eqref{continuity} on \eqref{error_eq_fd}. The resulting bound will contain computable terms in the $|\cdot |_A$-norm, which can, in turn, be estimated by a more easily localisable term of the form $\epsilon^{-1/2}||\cdot||_{L^2(\Omega)}$. Numerical experiments (omitted here for brevity) show that this alternative estimator produces very similar results to the ones based on Theorem \ref{apost_fd_thm}.
\end{remark} 

\begin{remark}
Although efficiency bounds are not presented in this work, we refer to \cite{V05} for the proof of efficiency bounds for the a posteriori estimator presented therein for the SUPG spatial setting which should be extendable to the estimator presented in Theorem \ref{apost_fd_thm} given the similarity of some of the terms presented.
\end{remark}

\begin{remark}
The use of elliptic reconstruction is not essential for the proof of Theorem \ref{apost_sd_thm} and Theorem \ref{apost_fd_thm}. It is possible to derive the residual based a posteriori bounds directly albeit at the cost of a lengthier calculation. The advantage of using the elliptic reconstruction in the proof lies in the fact that it can be easily modified to include non-residual based spatial a posteriori estimators. This, in turn, may offer improvements in robustness with respect to the P\'eclet number (cf. \cite{S08}).
\end{remark}
 
\end{section}

\begin{section}{An adaptive algorithm}\label{adaptive_sec}
The a posteriori bounds presented above will be used to drive a space-time adaptive algorithm. A number of adaptive algorithms for parabolic problems have been proposed in the literature; see e.g.~\cite{CF04,SS05} and the references therein. 

Here, we propose a variant of the adaptive algorithms from \cite{CF04,SS05} which appears to perform well for our discretisation. The pseudocode is given in Algorithm 1.
The algorithm is based on using different parts of the a posteriori estimator from Theorem \eqref{apost_fd_thm} to drive space-time adaptivity. 

\begin{algorithm} \label{adaptive_algorithm}
  \begin{algorithmic}[1]
     \State {\bf Input:} $\epsilon, {\bf a}, b, f, u_0, T, \Omega, m, n, \zeta^0, \gamma, {\tt initol},{\tt ttol}, {\tt stola}, {\tt stolb}, {\tt ref\%}, {\tt coar\%}$.
     \State Calculate $u_h^0$.
     \State  Set $j = 0$, $\tau_1, ..., \tau_n = T/n$, $time = \tau_1$, $threshold = T/m$, $counter = 0$.
    \While {$\eta_I > initol$}
    \State Refine $\zeta^0$ by 10\% and coarsen $\zeta^0$ by 5\%.
     \State Calculate $u_h^0$.
    \EndWhile
    \While {  $time \leq  T$}
    \State Calculate $u_h^{j+1}$ from $u_h^j$.
    \While
     {$\hat{\eta}_{T,j+1}>$ {\tt ttol}}
     \State $n \leftarrow n+1$

 \State $time \leftarrow time - \tau_{j+1}$

\State $\tau_{j+3} = \tau_{j+2}, ..., \tau_{n} = \tau_{n-1}$

 \State $\tau_{j+2} = \tau_{j+1}/2$

 \State $\tau_{j+1} \leftarrow \tau_{j+1}/2$

 \State $\time \leftarrow time + \tau_{j+1}$

\State Calculate $u_h^{j+1}$ from $u_h^j$.
    \EndWhile
    \State $counter \leftarrow counter + \tau_{j+1}$
    \If {$\eta_{S_1,j+1} > {\tt stola}$ and $counter > threshold$}
    \State Create $\zeta^{j+1}$ from $\zeta^j$ by refining by ${\tt ref}\%$.
    \EndIf
    \If {${\tt stolb} < \eta_{S_1,j+1}< {\tt stola}$ and $counter > threshold$}
    \State Create $\zeta^{j+1}$ from $\zeta^j$ by refining by ${\tt ref}\%$ and coarsening by ${\tt coar}\%$.
    \EndIf
     \If {$\eta_{S_1,j+1} < {\tt stolb}$ and $counter > threshold$}
    \State Create $\zeta^{j+1}$ from $\zeta^j$ by coarsening by ${\tt coar}\%$.
    \EndIf
    \State Calculate $u_h^{j+1}$ from $u_h^j$.
    \State $j\leftarrow j+1$
    \State $time \leftarrow time + \tau_{j+1}$
      \If {$counter < threshold$}
    \State counter = 0
    \EndIf
    \EndWhile
  \end{algorithmic}
  \caption{Space-time adaptivity}
\end{algorithm}
\end{section}

Both mesh refinement and coarsening are driven by the term $\eta_{S_1,j+1}$. The size of the elemental contributions to $\eta_{S_1,j+1}$ determines the percentages of elements to be refined and/or coarsened depending on the two spatial tolerances {\tt stola} and {\tt stolb}. 
The nature of the time estimator, $\eta_T$, makes it difficult to use as a temporal refinement indicator so, to this end we define 
$\hat{\eta}_{T,j+1}$ given by
\begin{equation}
\hat{\eta}_{T,j+1}^2 = \frac{1}{4} \tau_{j+1} \eta_{T_1,j+1}^2  + \min\{\alpha_T,T\}\int_{t^j}^{t^{j+1}} \! \eta^2_{T_2,j+1} \, dt.
\end{equation}
The sum of all terms $\hat{\eta}_{T,j+1}$ bound $\eta_T$ and hence can be used to drive temporal refinement subject to a time tolerance {\tt ttol} on each time interval.

\begin{remark}
The term $ \eta_{S_2,j+1}$ is smaller than $\eta_{S_1,j+1}$ both from a theoretical and a practical standpoint. It is, therefore, not used to drive the spatial mesh modification.
\end{remark}

A difference of Algorithm 1 to (standard) adaptivity algorithms for conforming finite elements for parabolic problems is that here a limit is given on how much the mesh can change. This is introduced in order to to ensure termination of the algorithm.
The input value $m$ is a bound on how many times the mesh can change per unit time. The value of $m$ can be very large in practice.

\begin{section}{Numerical experiments}\label{numerics_sec}

We shall investigate numerically the presented a posteriori bounds and the performance of the adaptive algorithm through an implementation based on the {\tt deal.II} finite element library (\cite{BHK07}). All the numerical experiments have been performed using the  high performance computing facility ALICE at the University of Leicester. 

We denote by $\lambda_k$ the total number of degrees of freedom on the union mesh $\zeta^k \cup \zeta^{k+1}$. Hence, the weighted total number of degrees of freedom of the problem is given by
\begin{equation}
Total\mbox{ }DoFs := \sum_{j=0}^{n-1} \tau_{j+1}\lambda_j.
\end{equation}
In all examples presented below, unless otherwise stated, $\gamma=10$,  ${\tt stolb} = {\tt stola}/5$, ${\tt ref}\%=6.25\%$ and $\zeta^0$ is an $8\times8$ uniform quadrilateral mesh. We further take ${\tt coar} \% = 10 \%$ for Example 1, Example 2 and Example 4 while we set ${\tt coar} \% = 30 \%$ for Example 3.
\begin{subsection}{Example 1}

\noindent Let $\Omega = (0,1)^2$, ${\bf a}=(1,1)^T$, $b=0$, $u_0=0$, $T=10$ and select the function $f$ so that the exact solution to problem \eqref{model_weak} is given by
\begin{equation}
u(x,y,t)=(1-e^{-t})\bigg(\frac{e^{(x-1)/\varepsilon}-1}{e^{-1/\varepsilon}-1}+x-1 \bigg)\bigg(\frac{e^{(y-1)/\varepsilon}-1}{e^{-1/\varepsilon}-1}+y-1 \bigg).
\end{equation}
The solution exhibits boundary layers at the outflow boundary of the domain of width $\mathcal{O}(\varepsilon)$ as well as a temporal boundary layer.

We begin by assessing the decay of the estimators on uniform spatial and temporal meshes. To this end, the number of time-steps is doubled while the number of spatial degrees of freedom is quadrupled by respective uniform refinements. The results for the spatial polynomial degree $p=1$ are given in Tables \ref{ex1_opt1} and \ref{ex1_opt2}. As expected, the error halves (Err. Ratio) for each space-time uniform refinement and the same is observed for the a posteriori estimator (Est. Ratio), implying optimality of the a posteriori error estimator for this example. This is achieved immediately for $\varepsilon = 1$ and upon sufficient resolution of the boundary layer for $\varepsilon = 10^{-2}$. The a posteriori estimator appears to be of optimal rate under uniform refinement in all the numerical examples presented in this work; these results are omitted for brevity.

\begin{table}
\begin{center}
\begin{tabular}{||c|r|c|c|c|c||}
\hline
Time steps & Total DoFs & Est. & Est.Ratio & Error & Err.Ratio \\
\hline
10&	  160&  	3.83e-1	& $\ $ &	6.45e-2	 & $\ $ \\
20&	640&	 1.94e-1  &  0.507	& 3.07e-2 &	0.475 \\
40&	2560	&    9.79e-2	& 0.502&	 1.47e-2 &	0.479\\
80&	10240&	4.90e-2&	0.501&	 7.17e-3 &	0.487\\
160&	40960&	2.45e-2&	0.500&	3.53e-3&	0.492\\
320&	163840	& 1.22e-2&	0.500&	1.75e-3&	0.496\\
640&	655360	& 6.14e-3&	0.500&	8.72e-4&	0.497\\
1280	& 2621440&	3.07e-3&	0.500&	4.35e-4&	0.498\\
\hline
\end{tabular}
\caption{Example 1. Decay of the a posteriori error estimator for $p=1$, $\varepsilon=1$.} 
\label{ex1_opt1}
\end{center}
\end{table}

\begin{table}
\begin{center}
\begin{tabular}{||c|r|c|c|c|c||}
\hline
Time steps & Total DoFs & Est. & Est.Ratio & Error & Err.Ratio \\
\hline
10&	  160&  	1.66e+1	& $\ $ &	8.56e-1	 & $\ $ \\
20&	640&	     1.29e+1  &  0.773	& 1.06e+0 &	1.240 \\
40&	2560	&       1.08e+1	& 0.838&	 1.18e+0 &	1.114\\
80&	10240&	   8.68e+0&	0.802&	 1.27e+0 &	1.078\\
160&	40960&	5.91e+0&	0.680&	1.14e+0&	    0.899\\
320&	163840	&  3.47e+0&	0.587&	 7.54e-1&	   0.657\\
640&	655360	&  1.88e+0&	0.542&	 4.06e-1&	   0.538\\
1280	& 2621440&	 0.98e+0&	0.520&	2.01e-1&	0.496\\
\hline
\end{tabular}
\caption{Example 1. Decay of the a posteriori error estimator for $p=1$, $\varepsilon=10^{-2}$.} 
\label{ex1_opt2}
\end{center}
\end{table}
 
We now assess the performance of the space-time adaptive algorithm presented in Section~\ref{adaptive_sec}; we do this for $p>1$ as the case $p=1$ needs special care and hence will be discussed separately.

We begin by fixing a temporal tolerance that produces enough time steps so that the temporal contribution to the error is very small in comparison to the spatial contribution. The spatial tolerance is then gradually reduced to compare the rates of convergence of the spatial estimator with that of the actual total error. Results for $\epsilon=1$ and $\epsilon=10^{-2}$ and for polynomial degrees $p=2$ and $p=3$ are shown in Figure~\ref{Example1_errors_eff}; here as in all the following convergence plots an unmarked line indicates the optimal order of convergence.
Optimal rates of convergence are observed with respect to the total degrees of freedom for both the estimator and the error. Further, as  shown in the bottom plots of Figure~\ref{Example1_errors_eff}, the effectivity indices 
are bounded and remain between 6 and 11 for the two values of $\epsilon$; these are directly comparable to those observed in \cite{SZ09} for the stationary problem. 
\begin{figure}
\centering

\includegraphics[scale=.5]{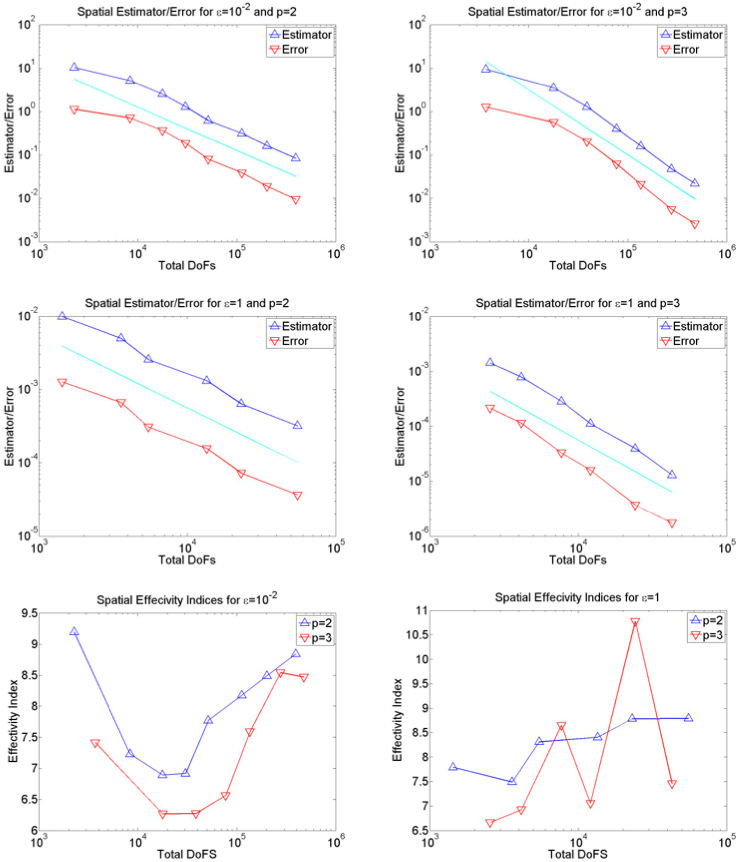}

\caption{Example 1: Spatial rates of convergence and effectivity indices for $\epsilon=1,\epsilon=10^{-2}$ and $p=2,3$ }
\label{Example1_errors_eff}
\end{figure}
\begin{figure}
\centering

\includegraphics[scale=.35]{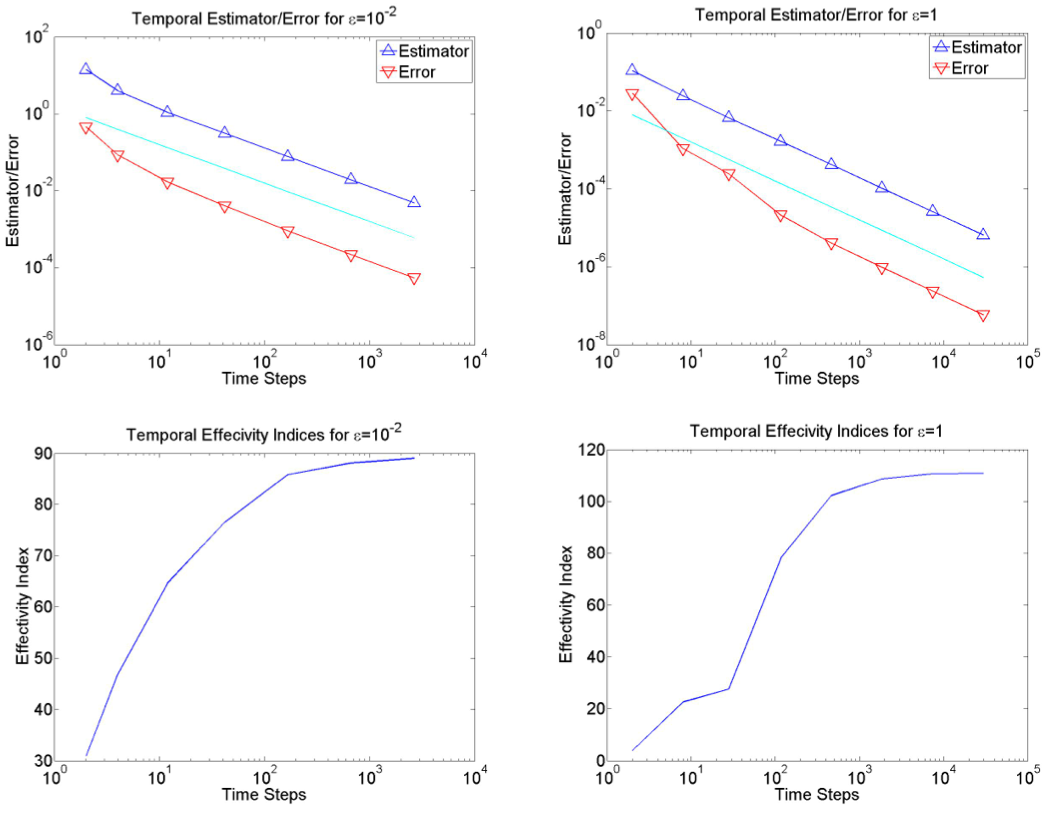}

\caption{Example 1: Temporal rates of convergence and effectivity indices for $\epsilon=1,10^{-2}$.}
\label{Example1_temp_errors_eff}
\end{figure}
\begin{figure}
\centering

\includegraphics[scale=.35]{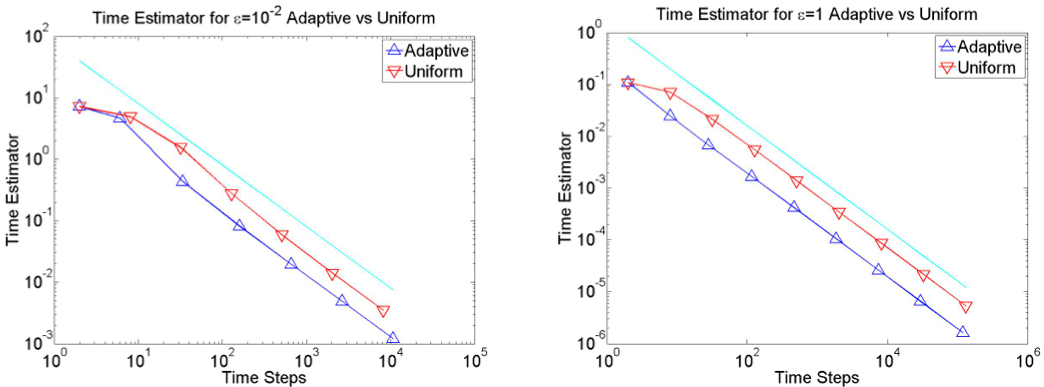}

\caption{Example 1:  Adaptive vs uniform comparison for $\epsilon=1,10^{-2}$.}
\label{Example1_temp_errors_eff_compare}
\end{figure}

In order to study the temporal rates of convergence of the adaptive algorithm any boundary layers must be fully resolved so that the spatial error is dominated by the temporal one. To this end, we set $\gamma=100$ and use $p=10$ on a sufficiently fine fixed spatial mesh. We monitor the algorithm's behaviour upon gradually reducing the temporal tolerance. 
The results are given in Figure \ref{Example1_temp_errors_eff}.
Optimal order is observed with respect to both the time estimator and the error. The effectivity indices appear to remain bounded and converge to a smaller value for $\epsilon=10^{-2}$ than for $\epsilon=1$ which is evidence of robustness with respect to the P\'eclet number.

The presence of a temporal boundary layer in the solution motivates a comparison between adaptive and uniform time-stepping. We do this by using $p=3$ and a sufficiently resolved spatial mesh and by reducing the temporal tolerance in an adaptive fashion and uniformly, respectively. 
The results are given in the plots in Figure \ref{Example1_temp_errors_eff_compare}: the savings achieved by using adaptive time-stepping are evident.

Let us now discuss the behaviour of the space-time a posteriori estimator in the case $p=1$. Recall that the estimator has already been proven to be of optimal rate in both space and time under uniform refinement, {\em cf.} Tables~\ref{ex1_opt1} and~\ref{ex1_opt2}.
In the adaptive setting, however, where the mesh can change between time steps, we observe that the rate of convergence of  the time estimator for $p=1$ may be slower than the expected optimal rate. This is indeed the case for this test problem, as shown in the right plot of Figure~\ref{ex1_p1} by comparison with the results obtained with $p=2$ and $p=3$.
On the other hand, the spatial estimator retains optimal rate, as shown in the left plot of Figure~\ref{ex1_p1}.
\begin{figure}
\centering

\includegraphics[scale=.35]{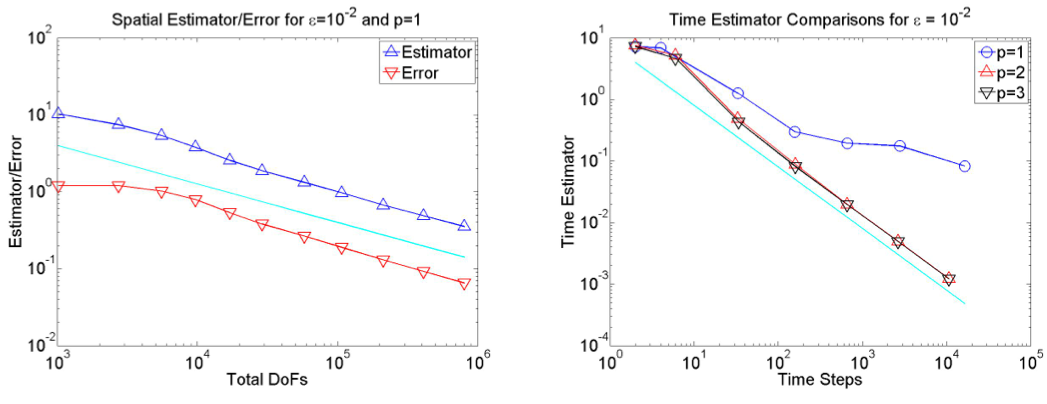}

\caption{Example 1: Spatial and temporal rates of convergence  for $\epsilon=10^{-2}$ and $p=1$.}
\label{ex1_p1}
\end{figure}

Notice that the method itself could be suboptimal under mesh change for $p=1$; this is something we have not been able to test and indeed the nature of this degeneracy  remains unclear to the authors. It is also unclear if this is, or not, a special feature of the backward-Euler-dG method considered in this work. In the literature, numerical experiments including \emph{rates} of convergence of space-time adaptive algorithms for parabolic problems are very rare, especially for non-conforming methods. Therefore, it is not possible at this point to assess the generality of this phenomenon. We mention in passing the recent work \cite{BKM12} where degeneracy due to mesh modification (including mesh refinement) has been observed for Crank-Nicolson FEM discretisations. 
As shown above, the degeneracy of the temporal rate of convergence when spatial mesh-change is present disappears for $p>1$. Hence, for the remainder of this work, all the numerical experiments with space-time adaptivity are performed with $p>1$.

\end{subsection}

\begin{subsection}{Example 2}

We set $\Omega=(-1,1)^2$, ${\bf a}=(1,1)^T$, $b=1$, $f=sin(5t)xy$, $u_0=0$ and $T = 2\pi$. The solution exhibits layers of width $\mathcal{O}(\varepsilon)$ in the proximity of the outflow boundary and is oscillatory in time. The sharpness of the boundary layers depend on time, thus making this a good test of the ability of the algorithm to add and remove degrees of freedom.
\begin{figure}
\centering

\includegraphics[scale=.35]{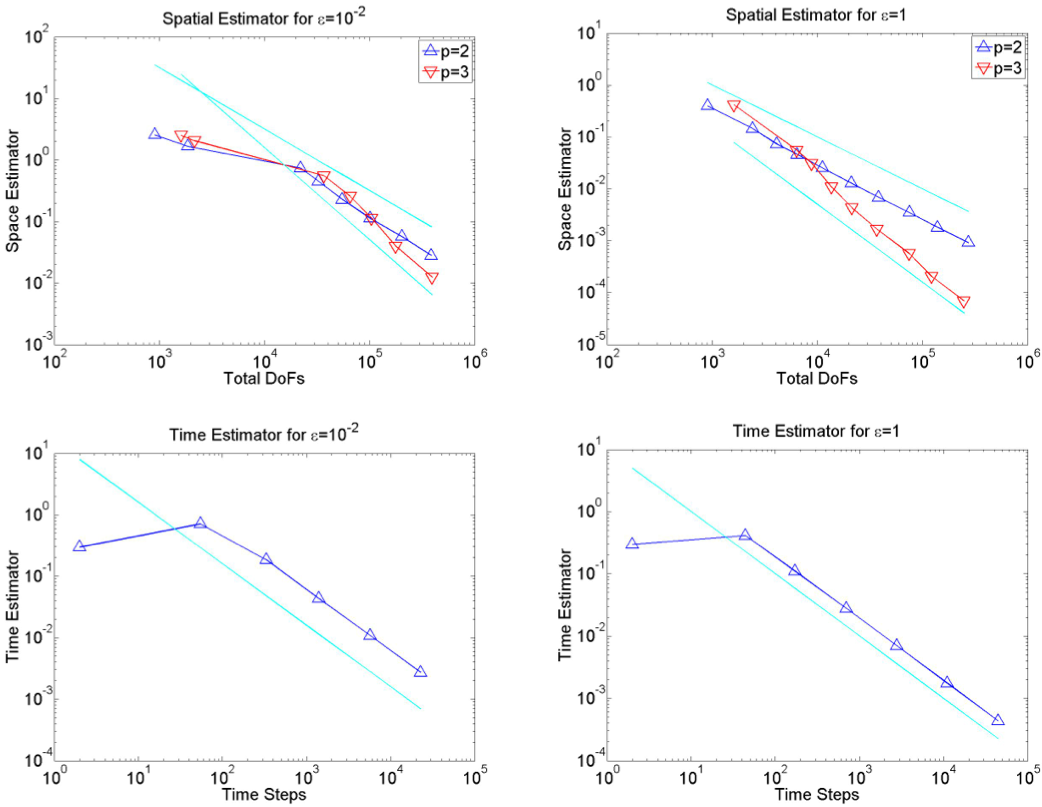}

\caption{Example 2: Spatial and temporal rates of convergence for $\varepsilon=1,10^{-2}$.}
\label{Example2_rates}
\end{figure}

As in Example 1, we begin by fixing a temporal tolerance while decreasing the spatial tolerance to observe the rates of convergence for the space estimator. We then set $p=3$ with a spatial tolerance small enough to resolve any boundary layers, while reducing the temporal tolerance to observe the rates of the time estimator. The results are displayed in Figure \ref{Example2_rates}. Optimal rates of convergence are observed for both the space and the time estimators.

To assess the mesh change driven by the adaptive algorithm we also plot the individual degrees of freedom on each mesh against time for a given spatial tolerance and temporal tolerance. The results are given in Figure \ref{ex2_dof_time}. We observe that the adaptive algorithm is adding and removing degrees of freedom at a rate that is in accordance with the oscillating nature of the solution driven by the sinusoidal forcing function $f$. 

\begin{figure}
\centering

\includegraphics[scale=.35]{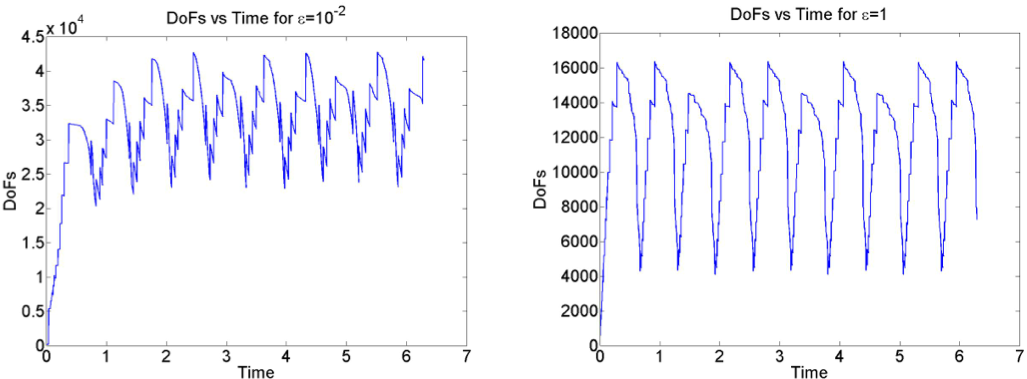}

\caption{Example 2: Degrees of freedom on each time step plotted against time for $\varepsilon=1,10^{-2}$}
\label{ex2_dof_time}
\end{figure}
\end{subsection}

\begin{subsection}{Example 3}
 Let $\Omega=(-1,1)^2$,  $T = 100$, ${\bf a}=(y,-x)^T$, $b=0$, $f=0$, and $u_0=e^{-64(x-0.5)^2}e^{-64(y-0.5)^2}$.
  The PDE convects the initial two dimensional Gaussian profile along the circular wind while diffusing it at a rate depending upon $\varepsilon$.
 
\begin{figure}
\centering

\includegraphics[scale=.35]{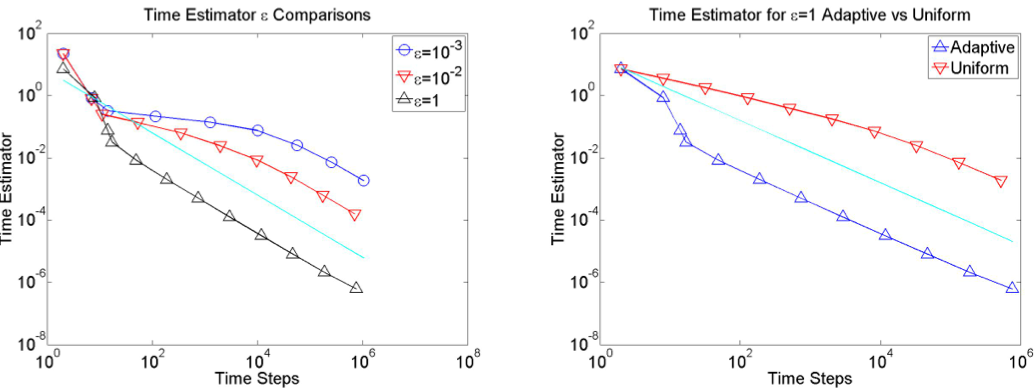}

\caption{Example 3: Convergence of the time estimator for $\varepsilon=1,10^{-2},10^{-3}$ and comparison of adaptive vs. uniform time steps.}
\label{Example3_time}
\end{figure}

Setting $p=3$ and fixing a spatial tolerance and an initial tolerance, we reduce the temporal tolerance to observe the temporal rates of the problem. Additionally, for $\epsilon=1$, we compare adaptive time-stepping and uniform time-stepping. The results are given in Figure \ref{Example3_time}. Some meshes at various time steps  produced by the algorithm for $\epsilon=10^{-3}$ are shown in Figure \ref{Example3_meshes}.
\begin{figure}[!]
\centering

\includegraphics[scale=.35]{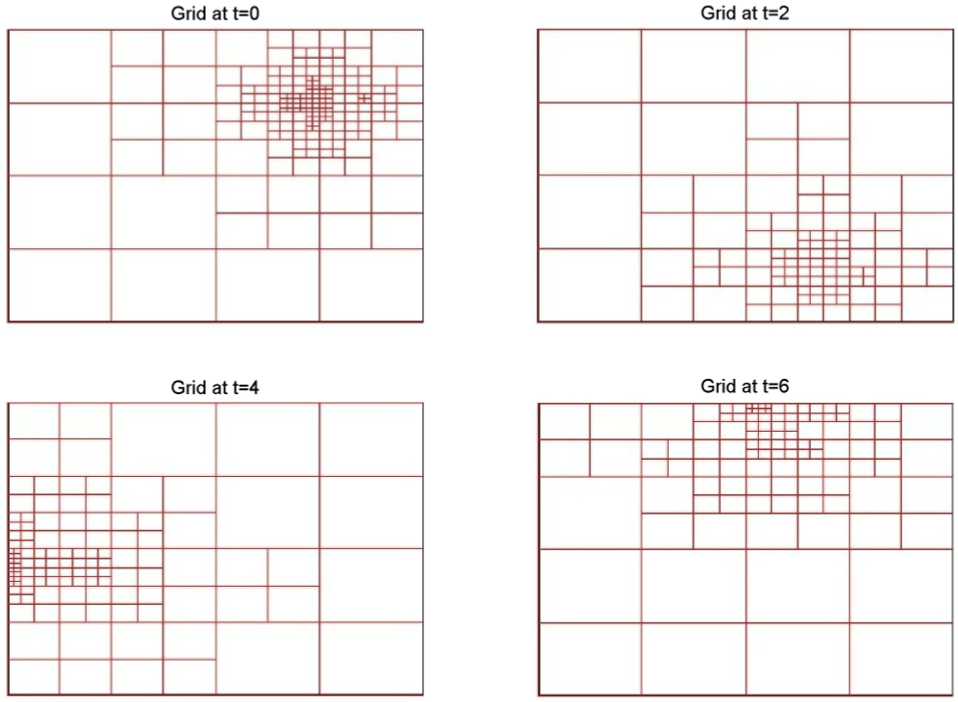}

\caption{Example 3: grid snapshots.}
\label{Example3_meshes}
\end{figure}

The smaller the value of $\epsilon$, the longer it takes the time estimator to reach optimal order which is to be expected theoretically. Furthermore, large savings are observed using adaptive time-stepping over uniform time-stepping when the diffusion is large; time steps would be otherwise wasted when the solution is effectively zero.
\end{subsection}
\begin{subsection}{Example 4}
Let $\Omega=(0,1)^2$, ${\bf a}=(sin(t),cos(t))^T$, $b=0$, $f=1$, $ u_0=0$ and $T = 2\pi$. The nature of the solution is rather uniform in time but has a boundary layer of width $\mathcal{O}(\varepsilon)$ whose location does depend on time. Therefore, this example is well suited to test the ability of the algorithm to adapt the grid to the feature of the solution as time evolves.

 As in previous examples, we fix a temporal tolerance and then reduce the spatial tolerance to observe the rates of the space estimator. Again, we also fix $p=3$ and a spatial tolerance small enough to ensure that all boundary layers are sufficiently resolved and then observe the rates of the time estimator. These results are given in Figure \ref{ex4_rates}. Finally, grids at various times for $\epsilon=10^{-2}$ and $p=2$ are given in Figure \ref{ex4_grids}.
Optimal spatial and temporal rates of convergence are observed. 
The grids produced for $\epsilon=10^{-2}$ clearly show that the adaptive algorithm is picking up the boundary layers as they move around the domain and that unneeded degrees of freedom are not retained.
\begin{figure}
\centering

\includegraphics[scale=.35]{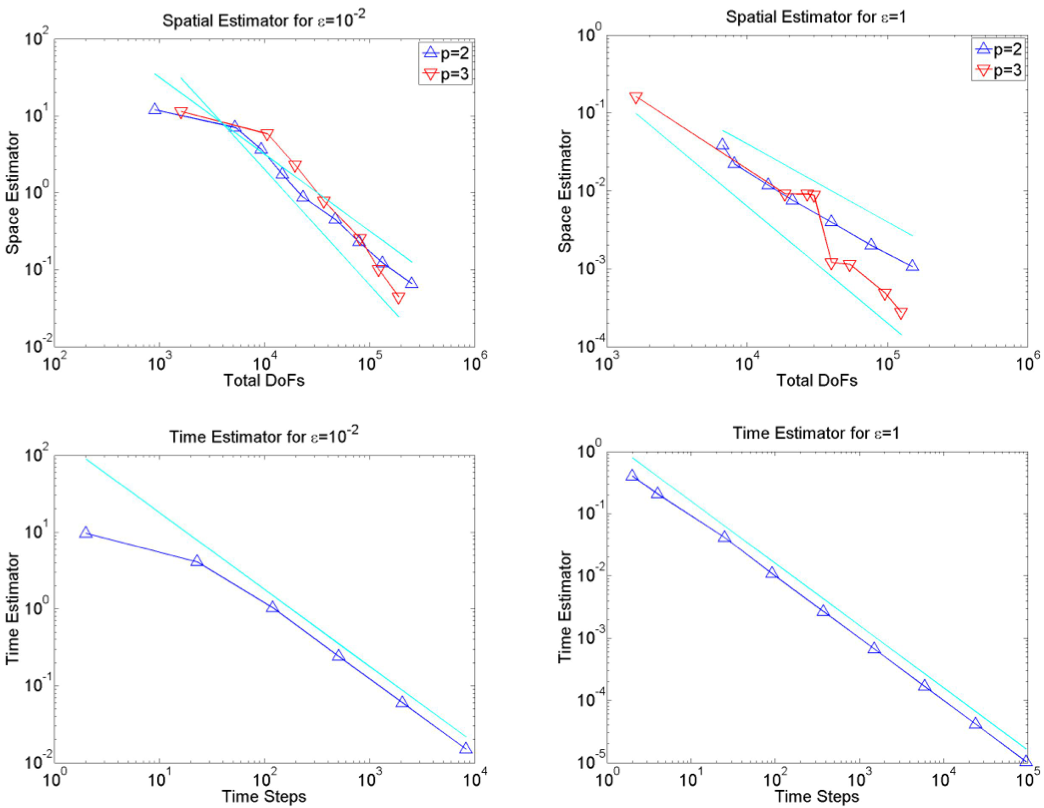}

\caption{Example 4: Spatial and temporal rates of convergence for $\varepsilon=1,10^{-2}$.}
\label{ex4_rates}\vspace{-.5 cm}

\end{figure}
\begin{figure}
\centering

\includegraphics[scale=.35]{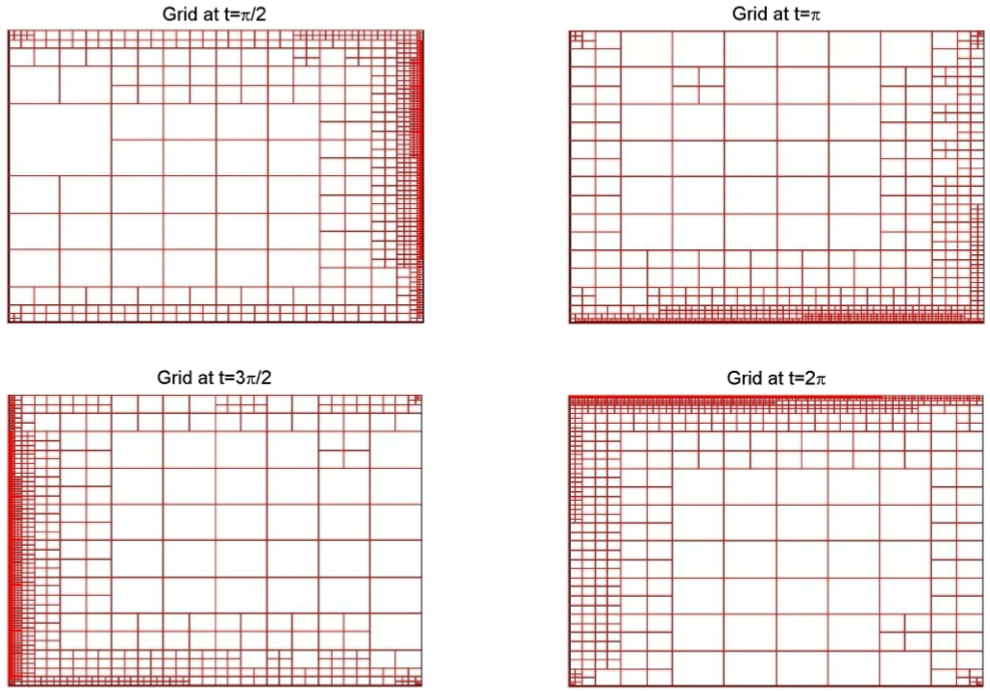}

\caption{Example 4: grid snapshots.}
\label{ex4_grids}\vspace{-.2 cm}
\end{figure}

\end{subsection}

\end{section}

\begin{section}{Conclusions}\label{conclusions_sec}

An a posteriori error estimator for the discontinuous Galerkin spatial discretisation of a non-stationary linear convection-diffusion equation is presented. The derivation of the estimator is based on reconstruction techniques to make use of robust a posteriori estimators for elliptic problems already developed in the literature. Our numerical examples clearly indicate that the error estimator is robust and the respective space-time adaptive algorithm works well for the studied problems. The spatial effecitivity indices are in an identical range to those observed in (\cite{SZ09}) and the temporal effectivity indices are approximately of the same order as those currently seen in the literature for similar problems (\cite{GLV11}) and appear independent of $\epsilon$. Further study to understand the temporal rate of convergence for $p=1$ under mesh-modification as well as extensions to high order time-stepping methods will be considered in the future. 
\end{section}

\bibliographystyle{plain}
\bibliography{bibliography}
\end{document}